\documentclass[a4paper,fleqn]{cas-dc}
\usepackage[authoryear,longnamesfirst]{natbib}
\usepackage{amsmath, amssymb}
\newtheorem{thm}{Theorem}
\newtheorem{cor}{Corollary}
\newtheorem{lem}{Lemma}
\newtheorem{rmk}{Remark}
\newtheorem{pro}{Proposition}
\newenvironment{proof}{{\bf Proof.}}{\hfill$\square$}
\newtheorem{ass}{Assumption}

\begin{document}
\let\WriteBookmarks\relax
\def\floatpagepagefraction{1}
\def\textpagefraction{.001}

\shorttitle{Discrete Time Backward Stochastic LQ Control }    

\shortauthors{L. Hu, Q. Meng \& M. Tang  }  

\title [mode = title]{Discrete Time Backward Stochastic Linear Quadratic Optimal Control }  

\tnotemark[1] 

\tnotetext[1]{Qingxin Meng was supported by the National Natural Science Foundation of China (No. 12271158) and the Key Projects of Natural Science Foundation of Zhejiang Province (No. LZ22A010005).} 

%

\author[1]{Ligui Hu}
\ead{19548222635@163.com}
\ead[url]{}

\credit{Conceptualization, Methodology, Writing – original draft}

\affiliation[1]{organization={Department of Mathematical Sciences, Huzhou Normal University},
                addressline={}, 
                city={Huzhou},
                postcode={313000}, 
                state={Zhejiang},
                country={China}}

\author[1]{Qingxin Meng}
\cormark[1]
\ead{mqx@zjhu.edu.cn}
\ead[URL]{}

\author[1]{Maoning Tang}
\ead{tmorning@zjhu.edu.cn}
\ead[url]{}

\credit{Conceptualization, Supervision, Validation, Writing – review}

\credit{Conceptualization, Supervision, Funding acquisition, Writing – review}

\cortext[1]{Corresponding author. E-mail: mqx@zjhu.edu.cn (Q. Meng)}



\begin{abstract}
This paper focuses on the discrete-time backward stochastic linear quadratic (BSLQ) optimal control problem with nonhomogeneous system terms and cost function cross terms. The terminal constraint of such systems distinguishes it from forward stochastic systems, posing unique challenges for analysis and solution. Within the Hilbert space framework, we first clarify the necessary and sufficient conditions for problem solvability, then introduce the backward stochastic system maximum principle to derive the Hamiltonian system characterizing the optimal control. After equivalent transformation of the original problem, we use the decoupling method to obtain the corresponding Riccati equation, and present the explicit state feedback expression of the optimal control and the analytical form of the value function. Finally, numerical examples verify the effectiveness and feasibility of the proposed method. The innovation lies in expanding the model generality: addressing the structural asymmetry issue in the Riccati equation with cross-term cost functions, we propose system equivalent transformation and decoupling techniques. Our theoretical results provide a new analytical framework for dynamic optimization problems such as financial portfolio optimization and risk management.
\end{abstract}



\begin{keywords}
  Discrete time \sep Backward stochastic differential equation \sep  Optimal control \sep Value function. 
\end{keywords}

\maketitle
\section{Introduction}
Stochastic optimal control is a fundamental branch of modern control theory, concerned with designing strategies to optimize cost functionals for dynamical systems influenced by random disturbances. Linear-quadratic (LQ) control, originating from pioneering work in the 1960s \cite{BelGliGro:56,Kal:60,BolPon:61}, has become a cornerstone in stochastic control theory, leading to extensive developments in theory, algorithms, and numerical methods. Notably, classical LQ control primarily considers systems governed by forward stochastic differential equations (FSDEs), which describe ``present-determined future'' dynamics but cannot directly address problems constrained by terminal conditions.

In practical applications, such as financial mathematics and risk management, decision-making problems often involve terminal objectives: the present decision must satisfy specified future targets, e.g., achieving a pre-defined portfolio return or controlling terminal risk. These problems are naturally modeled by backward stochastic differential equations (BSDEs). The foundational work of \cite{ParPen:90} established existence and uniqueness results for adapted solutions to BSDEs, providing the theoretical basis for backward stochastic control.

Building on BSDE theory, continuous-time backward stochastic linear-quadratic (BSLQ) control has been systematically developed. For example, \cite{DokZho:99} revealed the dual relationship between continuous-time BSLQ and forward LQ problems under terminal constraints; \cite{LimZho:01} presented a complete solution framework for continuous-time BSLQ, including the derivation of the associated stochastic Riccati equation and feedback optimal control; \cite{LiSunXio:19} extended these results to mean-field BSLQ systems, rigorously proving uniqueness and providing closed-loop solutions; \cite{HuaWanZha:20} incorporated partial observation and information constraints, enriching practical applicability. Recent surveys \cite{SunWenXio:22} summarize these developments, including indefinite-weight cases, and outline future directions in stochastic control and financial engineering. Despite these advances, discrete-time BSLQ control remains relatively underexplored.

Discrete-time frameworks are important in practice because financial transactions and control actions are often implemented at daily or weekly intervals, where continuous-time models provide only approximations by \cite{RasSte:05,KuLeeZhu:12}. To illustrate the practical relevance, consider a simple discrete-time portfolio allocation problem where an investor seeks to optimize the terminal wealth of a portfolio over $N$ periods under stochastic returns and variance constraints. This setup naturally leads to a discrete-time backward stochastic LQ control formulation, highlighting both the need for a backward model and the impact of terminal constraints.

Previous works have focused on numerical approximation and theoretical analysis for discrete-time backward stochastic LQ equations (BS$\Delta$Es) \cite{BouTou:04,Tur:15,GobTur:16,JiaHu:19,CheKawShiYam:23,HanLi:25}. However, explicit discrete-time BSLQ control theory, especially for nonhomogeneous systems with cross terms, is scarce. Notable progress includes \cite{ZhaLiuXuZha:25}, which addressed discrete-time BSLQ under homogeneous assumptions and partial information.

This paper investigates the discrete-time BSLQ optimal control problem with nonhomogeneous terms and cross-term cost functions. Compared with existing studies:

\begin{enumerate}
\item 
Different from  \cite{ZhaLiuXuZha:25} that focuses on homogeneous systems under partial information, this paper further considers nonhomogeneous state equations and cost function cross terms.
\item Unlike \cite{SunWuXio:23}, which addresses continuous-time BSLQ, we focus on the discrete-time setting and provide practical solution frameworks.
\item Unlike \cite{WuTanMen:25}, which studies discrete-time forward LQ systems, our approach directly addresses backward discrete-time dynamics using BS$\Delta$Es.
\end{enumerate}

The main contributions of this paper are as follows:

\begin{enumerate}
\item We extend discrete-time BSLQ models to include nonhomogeneous terms, providing a more realistic description of dynamic systems.
\item We develop a novel equivalent transformation and system decoupling approach to handle asymmetry in the Riccati equations caused by cross terms, yielding explicit Riccati equations, feedback laws, and value function expressions.
\item We demonstrate that the discrete-time framework is consistent with real-world decision-making processes, ensuring implementable solutions.
\end{enumerate}

The remainder of the paper is organized as follows. Section 2 presents the discrete-time BSLQ problem formulation and basic assumptions. Section 3 investigates the existence and uniqueness of optimal controls. Section 4 derives the optimal control and its explicit representation. Section 5 provides a numerical example illustrating the main results. Section 6 concludes the paper and discusses future research directions.
    
    Next, we systematically introduce the required notations.
    $\mathbb{R}$: The set of all real numbers; $\mathbb{R}^{n}$: The $n$-dimensional real vector space; $\mathbb{R}^{n \times m}$: The space of all $n \times m$ real matrices; $\mathbb{S}^{n}$: The set of all $n \times n$ real symmetric matrices; $\mathbb{S}_{+}^{n}$: The set of all positive semi-definite matrices in $\mathbb{S}^{n}$; $A^{\top}$: The transpose of matrix $A$; $A^{-1}$: The inverse of matrix $A$ (if $A$ is invertible); $I_{n}$: The $n \times n$ identity matrix. It can be abbreviated as $I$ when the dimension is clear; $A > 0$ ($A \geq 0$): $A$ is a (semi)positive definite matrix, where $A \in \mathbb{S}^{n}$; $A \gg 0$: $A$ is uniformly positive definite, i.e., there exists a constant $\delta > 0$ such that $A \geq \delta I_{n}$.

     In the control problem studied in this paper, the terminal time of the process is denoted by $N$. We define the time horizon as:
     \[
        \begin{aligned}
            \overline{\mathcal{T}} &= \{0, 1, \cdots, N\}, \\
            \mathcal{T} &= \{0, 1, \cdots, N-1\}.
        \end{aligned}
    \]
    On the matrix space $\mathbb{R}^{n \times m}$, a Frobenius inner product structure can be defined. Its expression is
    \[\langle M, N \rangle = \operatorname{tr}(M^{\top} N), \quad \forall M, N \in \mathbb{R}^{n \times m}.\]
    The norm induced by this inner product is given by the formula
    \[\| M \| = \sqrt{\langle M, M \rangle} = \sqrt{\operatorname{tr}(M^{\top} M)}.\]
    We now introduce some spaces used in this paper. Let $(\Omega, \mathcal{F}, \mathbb{F}, \mathbb{P})$ be a complete probability space endowed with a filtration $\mathbb{F}=\{\mathcal{F}_k\}_{k=0}^{N}$. We assume the filtration is $\mathbb{P}$-complete and generated by a martingale difference sequence $\{\omega_k\}_{k\in\mathcal{T}}$, satisfying the following properties:
    \[
        \begin{aligned}
            \mathcal{F}_k &= \sigma\{\omega_0, \omega_1, \dots, \omega_k\}, \\
            \mathbb{E}[\omega_{k+1} \mid \mathcal{F}_k] &= 0, \quad \mathbb{E}[(\omega_{k+1})^2 \mid \mathcal{F}_k] = 1,
        \end{aligned}
    \]
    with the convention that $\mathcal{F}_{-1} = \{\emptyset, \Omega\}$.
    Let $\mathcal{X} \subset \mathbb{R}^{n \times m}$.
    Denote by $L_{\mathcal{F}_{N-1}}^2(\Omega;\mathcal{X})$ the space of all $\mathcal{X}$-valued, $\mathcal{F}_{N-1}$-measurable random variables $h$ such that
    \[
        \|h\|_{L_{\mathcal{F}_{N-1}}^2(\Omega;\mathcal{X})} := \left( \mathbb{E}\left[ \|h\|_{\mathcal{X}}^2 \right] \right)^{1/2} < \infty.
    \]
    Let $L_{\mathbb{F}}^2(\mathcal{T};\mathcal{X})$ be the space of all $\mathcal{X}$-valued stochastic processes $h = \{h_k\}_{k \in \mathcal{T}}$ such that $h_k$ is $\mathcal{F}_{k-1}$-measurable for each $k \in \mathcal{T}$, and
    \[
        \|h\|_{L_{\mathbb{F}}^2(\mathcal{T};\mathcal{X})} := \left( \mathbb{E} \left[ \sum_{k=0}^{N-1} \|h_k\|_{\mathcal{X}}^2 \right] \right)^{1/2} < \infty.
    \]

      \section{Problem Statement}
    Consider the following discrete-time linear backward stochastic difference equation (BS$\Delta$E)
with control input over the finite time horizon $\mathcal{T}$:
\begin{equation} \label{eq:2.1}
    \begin{cases}
        y_k = A_k\mathbb{E}_{k-1}\bigl[ y_{k+1} \bigr] + B_ku_k \\
        \quad\quad + C_k\mathbb{E}_{k-1}\bigl[ y_{k+1}\omega_k \bigr] + q_k, \\
        y_N = \xi, \quad k \in \mathcal{T}
    \end{cases}
    \end{equation}
In the above, $y_k(\cdot)$ is called the system state process taking values in the Euclidean space $\mathbb{R}^n$,
and $u_k(\cdot)$ is called the control input process taking values in $\mathbb{R}^m$.
The notation $\mathbb{E}_{k-1}[\cdot]$ refers to the conditional expectation
with respect to the information set $\mathcal{F}_{k-1}$ at time $k-1$.
For each $k\in\mathcal{T}$, $A_k$, $B_k$, $C_k$ are deterministic constant matrices of proper dimensions,
$q_k$ is an $\mathbb{F}$-adapted process of proper dimension,
and $\xi\in L_{\mathcal{F}_{N-1}}^2(\Omega;\mathbb{R}^n)$ is the terminal condition.

    We now define the following Hilbert space:
    \[
    \begin{aligned}		
    &\mathcal{U} = \mathcal{U}(\mathcal{T}) \triangleq\bigg\{u= \left(u_{0}, u_{1}, \ldots, u_{N-1}\right) \bigg| u_{k} \text{ is }  \\
    &\mathcal{F}_{k-1}\text{-measurable, } u_{k} \in \mathbb{R}^{m}, \mathbb{E}\left[\sum_{k=0}^{N-1}\left|u_{k}\right|^{2}\right] < \infty\bigg\}.
    \end{aligned}
    \]
    Here, any $u  \in \mathcal{U}$ is designated as an admissible control process. The solution $y$ of state equation \eqref{eq:2.1} corresponding to such a control is called an admissible state process, and the pair $(y, u)$ is termed an admissible pair.

    In the state equation \eqref{eq:2.1}, for the admissible control process $u$ and the given terminal state $\xi$, we define the quadratic cost functional as follows:
    \begin{equation}\label{eq:2.2}
        \begin{aligned}
    	    & \mathcal{J}(N,\xi;u)\\
            & =\frac{1}{2}\mathbb{E}\Bigg\{\left\langle G_{0}y_{0},y_{0}\right\rangle \\
            &+ \sum_{k=0}^{N-1} \bigg[\left\langle \begin{pmatrix} Q_{k} & S_{k}^{\top} \\ S_{k} & R_{k} \end{pmatrix}\begin{pmatrix} \mathbb{E}_{k-1}[y_{k+1}] \\ u_k \end{pmatrix}, \begin{pmatrix} \mathbb{E}_{k-1}[y_{k+1}] \\
            u_k \end{pmatrix}\right\rangle\\
            &+ 2\left\langle \begin{pmatrix} \eta_{k} \\
            \rho_{k} \end{pmatrix}, \begin{pmatrix} \mathbb{E}_{k-1}[y_{k+1}]
            \\ u_k \end{pmatrix}\right\rangle\bigg] \Bigg\}.
        \end{aligned}
    \end{equation}
    Here, $G_0$ is a constant symmetric matrix,
and for each $k\in\mathcal{T}$, $Q_k,S_k,R_k$ are deterministic constant matrices.
Moreover, $\eta_k$ and $\rho_k$ are $\mathcal{F}_{k-1}$-measurable processes.

    \begin{rmk}
        Unlike the standard formulation of backward stochastic difference equations with an explicit martingale integrand, the present system incorporates the martingale term through the conditional expectation
\[
\mathbb{E}_{k-1}\left[y_{k+1}\omega_k\right],
\]
which is standard in discrete-time stochastic control and equivalent to introducing an implicit $Z_k$-process.
    \end{rmk}
	\begin{pro}$\big(DTBSLQ\big)$
		For a given terminal state $\xi \in L_{\mathcal{F}_{N-1}}^{2}(\Omega;\mathbb{R}^{n})$, find an admissible control $u^{*} \in \mathcal{U}$ such that:
		\begin{equation}\label{eq:2.3}
			V(N,\xi) \triangleq \mathcal{J}(N,\xi; u^{*}) = \inf_{u \in \mathcal{U}} \mathcal{J}(N,\xi; u).
		\end{equation}
	\end{pro}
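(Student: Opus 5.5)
The statement above is the formulation of Problem (DTBSLQ) rather than an assertion, so what needs proof is that it is \emph{well posed}: for every $\xi\in L_{\mathcal{F}_{N-1}}^2(\Omega;\mathbb{R}^n)$ and every $u\in\mathcal{U}$, the state equation \eqref{eq:2.1} has a unique adapted square-integrable solution, and $\mathcal{J}(N,\xi;u)$ in \eqref{eq:2.2} is a finite real number. Then $V(N,\xi)$ in \eqref{eq:2.3} is a well-defined element of $[-\infty,\infty)$. I would also recast $\mathcal{J}$ as a quadratic functional on the Hilbert space $\mathcal{U}$, since that is the form the later solvability analysis will use. Throughout I assume $q_k,\eta_k,\rho_k$ are square integrable and that $q_k$ is $\mathcal{F}_{k-1}$-measurable, matching the measurability of $u_k$.

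\textbf{Well-posedness of the state.} Solve \eqref{eq:2.1} by backward recursion from $y_N=\xi$. Suppose $y_{k+1}$ is $\mathcal{F}_k$-measurable with $\mathbb{E}|y_{k+1}|^2<\infty$. Since $\mathbb{E}_{k-1}[\omega_k^2]=1$, conditional Cauchy--Schwarz gives
\[
\mathbb{E}\bigl|\mathbb{E}_{k-1}[y_{k+1}\omega_k]\bigr|^2\le \mathbb{E}\bigl[\mathbb{E}_{k-1}|y_{k+1}|^2\bigr]=\mathbb{E}|y_{k+1}|^2 .
\]
Jensen's inequality gives the same bound for $\mathbb{E}_{k-1}[y_{k+1}]$. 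Hence the right-hand side of \eqref{eq:2.1} is $\mathcal{F}_{k-1}$-measurable and square integrable, and $y_k$ is uniquely determined. This gives the estimate
\[
\mathbb{E}|y_k|^2\le K\bigl(\mathbb{E}|y_{k+1}|^2+\mathbb{E}|u_k|^2+\mathbb{E}|q_k|^2\bigr).
\]
Iterating it $N$ times yields $\sup_k\mathbb{E}|y_k|^2\le K\bigl(\mathbb{E}|\xi|^2+\|u\|^2_{\mathcal{U}}+\|q\|^2\bigr)$, and uniqueness follows because each step is a direct definition. Each integrand in \eqref{eq:2.2} is therefore integrable by Cauchy--Schwarz, so $\mathcal{J}$ is finite.

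\textbf{Hilbert-space representation.} By linearity, $y=y^{u}+y^{0}$, where $y^{u}=\mathcal{L}u$ solves \eqref{eq:2.1} with $\xi=0$ and $q=0$. The map $\mathcal{L}:\mathcal{U}\to L^2$ is bounded linear by the estimate above. The term $y^{0}$ depends only on $(\xi,q)$. Substituting this decomposition into \eqref{eq:2.2} and collecting terms gives
\[
\mathcal{J}(N,\xi;u)=\tfrac12\bigl[\langle \mathcal{M}u,u\rangle+2\langle \mathcal{N}(\xi,q,\eta,\rho),u\rangle+c(\xi,q,\eta)\bigr].
\]
Here $\mathcal{M}$ is bounded and self-adjoint on $\mathcal{U}$, built from $R$, $S$, $Q$, $G_0$ and $\mathcal{L}$ together with its adjoint. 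The adjoint $\mathcal{L}^*$ is realized by a forward adjoint difference equation.

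From this representation, the standard Hilbert-space facts would then show:
\begin{enumerate}
\item the problem is finite and solvable if and only if $\mathcal{M}\ge0$ and $\mathcal{N}$ lies in the range of $\mathcal{M}$;
\item the minimizers are exactly the solutions of $\mathcal{M}u^*+\mathcal{N}=0$;
\item the minimizer is unique when $\mathcal{M}\gg0$.
\end{enumerate}

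\textbf{Main obstacle.} The step I expect to be delicate is computing $\mathcal{L}^*$ explicitly, that is, identifying the forward adjoint equation. The state enters through both $\mathbb{E}_{k-1}[y_{k+1}]$ and $\mathbb{E}_{k-1}[y_{k+1}\omega_k]$, so the duality pairing must be computed with the tower property. The adjoint state then picks up a term of the form $C_k^\top x_k\omega_k$. I would first verify this identity on $\mathbb{E}\langle y_{k+1},x_{k+1}\rangle-\mathbb{E}\langle y_k,x_k\rangle$ and sum over $k$.
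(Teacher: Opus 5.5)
Your proposal is correct and follows essentially the paper's own route: the paper decomposes the state and writes the cost as the quadratic functional \eqref{eq:3.1}--\eqref{eq:3.2}, proves well-definedness in Lemma~\ref{lem:3.2}, and characterizes optimality in Theorem~\ref{thm:3.5}, of which your range condition on $\mathcal{M}$ is an equivalent repackaging (your uniqueness under $\mathcal{M}\gg0$ matches Lemma~\ref{lem:3.4} and the remark after Theorem~\ref{thm:3.5}); it then identifies the adjoint through the duality sum \eqref{eq:4.6}, just as you plan. The one difference is that you prove Lemma~\ref{lem:2.3} yourself, correctly observing that \eqref{eq:2.1} has no martingale unknown and is an explicit backward recursion with $y_k$ being $\mathcal{F}_{k-1}$-measurable, whereas the paper simply cites that lemma.
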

	If there exists $u^{*} \in \mathcal{U}$ satisfying the above equality, then $u^{*}$ is referred to as the open-loop optimal control of the problem (DTBSLQ); the corresponding state process $y^{*}(\cdot) = \{y_{k}^{(\xi,u^{*})}\}_{k=0}^{N}$ is termed the optimal state process; the pair $(y^{*}, u^{*})$ is called the optimal pair; and $V(N,\xi)$ is referred to as the value function of the problem (DTBSLQ).

When the nonhomogeneous terms satisfy $q_k = \eta_k = \rho_k = 0$ for each $k\in\mathcal{T}$,
the original problem reduces to a homogeneous backward stochastic difference equation problem,
denoted as \textup{(DTBSLQ)$^0$}.
The simplified system can be expressed as
\begin{equation}\label{eq:2.4}
	\begin{cases}
	y_k = A_k\mathbb{E}_{k-1}[y_{k+1}] + B_k v_k
	      + C_k\mathbb{E}_{k-1}[y_{k+1}\omega_{k}], \\
	y_N = \xi,\quad k\in\mathcal{T},
	\end{cases}
\end{equation}
and the corresponding cost functional can be formulated as
\begin{equation}\label{eq:2.5}
\begin{aligned}
&\mathcal{J}^{0}(N,\xi;v)
\\&= \frac{1}{2}\mathbb{E}\Bigg\{
 \langle G_{0}y_{0},y_{0}\rangle
\\
&+ \sum_{k=0}^{N-1}\left\langle
        \begin{pmatrix}Q_{k} & S_{k}^{\top} \\ S_{k} & R_{k}\end{pmatrix}
        \begin{pmatrix}\mathbb{E}_{k-1}[y_{k+1}] \\ v_k\end{pmatrix},
        \begin{pmatrix}\mathbb{E}_{k-1}[y_{k+1}] \\ v_k\end{pmatrix}
    \right\rangle \Bigg\}.
\end{aligned}
\end{equation}

    We impose the following conditions on the coefficients of the state equation \eqref{eq:2.1} and the weighting matrices of the cost functional \eqref{eq:2.2}, leading to the fundamental assumptions below.

\begin{ass}\label{ass:2.1}
    For each $k\in\mathcal{T}$, the system coefficients satisfy
\[
A_k, C_k\in\mathbb{R}^{n\times n},\quad B_k\in\mathbb{R}^{n\times m}.
\]
Moreover, the nonhomogeneous term
\[
q(\cdot)\in L_{\mathbb{F}}^2(\mathcal{T};\mathbb{R}^n).
\]
\end{ass}
    \begin{ass}\label{ass:2.2}
   The weighting matrices satisfy, for each $k\in\mathcal{T}$,
\[
G_0\in\mathbb{S}^n,\quad Q_k\in\mathbb{S}^n,\quad R_k\in\mathbb{S}^m,\quad S_k\in\mathbb{R}^{m\times n},
\]
and the nonhomogeneous terms
\[
\eta(\cdot)\in L_{\mathbb{F}}^2(\mathcal{T};\mathbb{R}^n),\quad
\rho(\cdot)\in L_{\mathbb{F}}^2(\mathcal{T};\mathbb{R}^m).
\]
    \end{ass}
    To ensure the well-posedness of state equation \eqref{eq:2.1}, we introduce the following lemma, which is a direct consequence of BS$\Delta$Es; see \cite{NiuMenLiTan:26}.
    \begin{lem}\label{lem:2.3}
		Suppose Assumption \ref{ass:2.1} holds. Then for any terminal condition $\xi \in L_{\mathcal{F}_{N-1}}^{2}(\Omega;\mathbb{R}^{n})$, and any admissible control $u = \{u_{k}\}_{k \in \mathcal{T}} \in \mathcal{U}$, the state equation \eqref{eq:2.1} admits a unique adapted solution $y(\cdot) \equiv y^{(\xi,u)}(\cdot) = \left\{ y_k^{(\xi,u)} \right\}_{k=0}^{N}\in L_{\mathbb{F}}^2(\mathcal{T};\mathbb{R}^{n})$.
		Furthermore, there exists a constant $L > 0$ such that the following estimate holds:
			\begin{equation}\label{eq:2.6}
                \mathbb{E}
                \left[\sum_{k=0}^{N}|y_{k}|^{2}\right] \leq L\,\mathbb{E}\left[|\xi|^{2} + \sum_{k=0}^{N-1}\left(|u_k|^{2} + |q_k|^{2}\right)\right].
                \end{equation}
    \end{lem}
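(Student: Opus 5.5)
The equation \eqref{eq:2.1} is explicit and backward in time, so I will prove existence and uniqueness by backward recursion and then derive \eqref{eq:2.6} by backward induction on one-step estimates.

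\textbf{Existence and uniqueness.} Set $y_N=\xi$. Suppose $y_{k+1}$ has been constructed and is square integrable. Then $\mathbb{E}_{k-1}[y_{k+1}]$ and $\mathbb{E}_{k-1}[y_{k+1}\omega_k]$ are well defined and $\mathcal{F}_{k-1}$-measurable. For the second one we need $y_{k+1}\omega_k$ to be integrable. This holds because $\omega_k$ is conditionally square integrable: by Cauchy--Schwarz for conditional expectations,
\[
\bigl|\mathbb{E}_{k-1}[y_{k+1}\omega_k]\bigr|^2\le \mathbb{E}_{k-1}\bigl[|y_{k+1}|^2\bigr]\,\mathbb{E}_{k-1}\bigl[\omega_k^2\bigr]=\mathbb{E}_{k-1}\bigl[|y_{k+1}|^2\bigr],
\]
using $\mathbb{E}_{k-1}[\omega_k^2]=1$ from the filtration assumptions. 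Since $u_k$ and $q_k$ are $\mathcal{F}_{k-1}$-measurable (admissibility and Assumption \ref{ass:2.1}), the right-hand side of \eqref{eq:2.1} defines an $\mathcal{F}_{k-1}$-measurable $y_k$. Each step is a deterministic formula, so the solution is unique: any two solutions agree at $N$ and hence, by backward induction, at every $k$.

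\textbf{One-step estimate.} Apply $(a+b+c+d)^2\le 4(a^2+b^2+c^2+d^2)$, conditional Jensen $|\mathbb{E}_{k-1}[y_{k+1}]|^2\le \mathbb{E}_{k-1}|y_{k+1}|^2$, and the Cauchy--Schwarz bound above. Let $K=\max_k\{|A_k|^2,|B_k|^2,|C_k|^2,1\}$, which is finite since $\mathcal{T}$ is a finite set. Then
\[
\mathbb{E}|y_k|^2\le 4K\bigl(2\,\mathbb{E}|y_{k+1}|^2+\mathbb{E}|u_k|^2+\mathbb{E}|q_k|^2\bigr).
\]

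\textbf{Iteration.} Write $a_k=\mathbb{E}|y_k|^2$ and $M=8K$. Iterating the one-step estimate gives
\[
a_k\le M^{N-k}\,\mathbb{E}|\xi|^2+\sum_{j=k}^{N-1}M^{j-k+1}\,\mathbb{E}\bigl(|u_j|^2+|q_j|^2\bigr).
\]
Every power of $M$ appearing here is at most $M^N$. Summing over $k=0,\dots,N$ therefore yields \eqref{eq:2.6} with $L=(N+1)M^N$. In particular every $y_k$ is square integrable, which also justifies the induction hypothesis used in the existence step.

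The only step that needs care is the conditional Cauchy--Schwarz bound on $\mathbb{E}_{k-1}[y_{k+1}\omega_k]$, which relies on $\mathbb{E}[\omega_k^2\mid\mathcal{F}_{k-1}]=1$. The paper's filtration assumptions state this in the form $\mathbb{E}[\omega_{k+1}^2\mid\mathcal{F}_k]=1$, and I would invoke it with the index shifted by one. The rest of the argument is finite-horizon bookkeeping.
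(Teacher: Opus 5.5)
Your proof is correct, and it takes a different route from the paper. The paper gives no proof of Lemma~\ref{lem:2.3}; it only says the lemma is a direct consequence of the general well-posedness theory of BS$\Delta$Es and cites \cite{NiuMenLiTan:26}.

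You instead use the specific form of \eqref{eq:2.1}. The martingale component enters only through $\mathbb{E}_{k-1}[y_{k+1}\omega_k]$, so the equation is an explicit backward recursion. Existence and a.s.\ uniqueness are therefore immediate, with no martingale representation or fixed-point argument needed. The $\mathcal{F}_{k-1}$-measurability of $y_k$ comes directly from admissibility of $u$ and Assumption~\ref{ass:2.1}. The estimate then follows from conditional Jensen, conditional Cauchy--Schwarz with $\mathbb{E}_{k-1}[\omega_k^2]=1$, and a discrete Gronwall-type iteration, giving the explicit constant $L=(N+1)(8K)^N$.

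The citation buys generality: it would also cover formulations with an explicit $z$-process or more general noise. Your route is self-contained, shows exactly which filtration assumptions are used, and gives a concrete constant.

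One small point of order: establish integrability of $y_{k+1}\omega_k$ before writing $\mathbb{E}_{k-1}[y_{k+1}\omega_k]$. The unconditional Cauchy--Schwarz inequality does this, using $\mathbb{E}[\omega_k^2]=\mathbb{E}\bigl[\mathbb{E}_{k-1}[\omega_k^2]\bigr]=1$ and $y_{k+1}\in L^2$. Your displayed conditional inequality then gives the bound exactly as you use it.
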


\section{Representation of the Cost Functional and Optimal Control}

In this section, we represent the cost functional in a Hilbert space framework and prepare for the characterization of the optimal control. Throughout this section, we keep the notation introduced in Section 2.

Let the terminal state be $\xi\in L_{\mathcal F_{N-1}}^{2}(\Omega;\mathbb R^n)$, and let the control process be $u=\{u_k\}_{k\in\mathcal T}\in\mathcal U$. Let
\[
\{y_k^{(\xi,u)}\}_{k=0}^{N}
\]
be the unique adapted solution to the state equation \eqref{eq:2.1}.

By the linearity of the state equation with respect to the data and the uniqueness of the adapted solution, the solution $y^{(\xi,u)}$ admits the decomposition
\begin{equation}\label{eq:3.decomp}
y_k^{(\xi,u)}=y_k^\xi+y_k^u+y_k^0,\qquad k=0,1,\dots,N,
\end{equation}
where:
\begin{itemize}
    \item $y^\xi=\{y_k^\xi\}_{k=0}^N$ is the unique adapted solution corresponding to terminal state $\xi$, zero control, and zero inhomogeneous term;
    \item $y^u=\{y_k^u\}_{k=0}^N$ is the unique adapted solution corresponding to zero terminal state, control $u$, and zero inhomogeneous term;
    \item $y^0=\{y_k^0\}_{k=0}^N$ is the unique adapted solution corresponding to zero terminal state, zero control, and the given inhomogeneous term $q(\cdot)$.
\end{itemize}

To formulate the problem in Hilbert spaces, we introduce
\[
\mathscr X_{-1}:=L_{\mathcal F_{-1}}^2(\Omega;\mathbb R^n),\quad
\mathscr X_{N-1}:=L_{\mathcal F_{N-1}}^2(\Omega;\mathbb R^n),\]
\[
\mathscr Z:= L_{\mathbb{F}}^2(\mathcal{T};\mathbb{R}^n),
\]
\[
    \begin{aligned}
\mathscr Y
:=&
\left\{
\phi=\{\phi_k\}_{k\in\mathcal T}
\ \middle|\right.\\&\left.\
\phi_k\in L_{\mathcal F_k}^2(\Omega;\mathbb R^n),\
\mathbb E\sum_{k=0}^{N-1}|\phi_k|^2<\infty
\right\},
\end{aligned}
\]

Define the conditional expectation operator
\[
\Phi:\mathscr Y\to\mathscr Z,
\qquad
(\Phi\phi)_k:=\mathbb E_{k-1}[\phi_k],\qquad k\in\mathcal T.
\]

For the terminal-state component, define
\[
\mathcal I_0:\mathscr X_{N-1}\to \mathscr X_{-1},
\qquad
\mathcal I_0\xi:=y_0^\xi,
\]
\[
\mathcal I_1:\mathscr X_{N-1}\to \mathscr Y,
\qquad
(\mathcal I_1\xi)_k:=y_{k+1}^\xi,\qquad k\in\mathcal T.
\]

For the control-driven component, define
\[
\mathcal L_0:\mathcal U\to \mathscr X_{-1},
\qquad
\mathcal L_0u:=y_0^u,
\]
\[
\mathcal L_1:\mathcal U\to \mathscr Y,
\qquad
(\mathcal L_1u)_k:=y_{k+1}^u,\qquad k\in\mathcal T.
\]

For the inhomogeneous component, let
\[
\mathcal Q_0:=y_0^0\in \mathscr X_{-1},
\qquad
(\mathcal Q_1)_k:=y_{k+1}^0,\qquad k\in\mathcal T,
\]
so that $\mathcal Q_1\in\mathscr Y$.

We also define the pointwise multiplication operators
\[
Q:\mathscr Z\to\mathscr Z,\qquad (Q\phi)_k:=Q_k\phi_k,
\]
\[
S:\mathscr Z\to\mathcal U,\qquad (S\phi)_k:=S_k\phi_k,
\]
\[
R:\mathcal U\to\mathcal U,\qquad (Ru)_k:=R_ku_k,\qquad k\in\mathcal T.
\]
All the above operators are bounded linear operators, which follows from Lemma \ref{lem:2.3}, finite horizon, and bounded weighting matrices.
For any bounded linear operator $\mathcal O$ between Hilbert spaces, let $\mathcal O^*$ denote its adjoint operator.

On $\mathscr X_{-1}$ and $\mathscr X_{N-1}$, we define
\[
[[X,Y]]:=\mathbb E\langle X,Y\rangle.
\]
On $\mathscr Y$, $\mathscr Z$, and $\mathcal U$, we define
\[
[[\phi,\psi]]:=\mathbb E\Big[\sum_{k=0}^{N-1}\langle \phi_k,\psi_k\rangle\Big].
\]
For simplicity, the same notation $[[\cdot,\cdot]]$ is used whenever no confusion arises.

Then
\[
y_0^{(\xi,u)}=\mathcal I_0\xi+\mathcal L_0u+\mathcal Q_0,
\]
and
\[
y_{k+1}^{(\xi,u)}
=(\mathcal I_1\xi)_k+(\mathcal L_1u)_k+(\mathcal Q_1)_k,\qquad k\in\mathcal T.
\]
Hence,
\[
\mathbb E_{k-1}[y_{k+1}^{(\xi,u)}]
=
\big(\Phi(\mathcal I_1\xi+\mathcal L_1u+\mathcal Q_1)\big)_k,
\qquad k\in\mathcal T.
\]

Therefore, the cost functional \eqref{eq:2.2} can be rewritten as
\begin{equation}\label{eq:3.1}
\begin{aligned}
&\mathcal J(N,\xi;u)
\\=&\frac12 [[G_0(\mathcal I_0\xi+\mathcal L_0u+\mathcal Q_0),\mathcal I_0\xi+\mathcal L_0u+\mathcal Q_0]]
\\
&+\frac12 [[Q\Phi(\mathcal I_1\xi+\mathcal L_1u+\mathcal Q_1),\Phi(\mathcal I_1\xi+\mathcal L_1u+\mathcal Q_1)]]
\\
&+[[S\Phi(\mathcal I_1\xi+\mathcal L_1u+\mathcal Q_1),u]]
+\frac12 [[Ru,u]]
\\
&+[[\eta,\Phi(\mathcal I_1\xi+\mathcal L_1u+\mathcal Q_1)]]
+[[\rho,u]]
\\
=&\frac12 [[\mathcal Au,u]]
+[[\mathcal B\xi,u]]
+\frac12 [[\mathcal C\xi,\xi]]
+[[\mathbf a,u]]
+[[\mathbf b,\xi]]
+\mathbf c,
\end{aligned}
\end{equation}
where
\begin{equation}\label{eq:3.2}
\begin{aligned}
\mathcal A
&=\mathcal L_0^*G_0\mathcal L_0
+\mathcal L_1^*\Phi^*Q\Phi\mathcal L_1
+\mathcal L_1^*\Phi^*S^\top
+S\Phi\mathcal L_1
+R,
\\
\mathcal B
&=\mathcal L_0^*G_0\mathcal I_0
+\mathcal L_1^*\Phi^*Q\Phi\mathcal I_1
+S\Phi\mathcal I_1,
\\
\mathcal C
&=\mathcal I_0^*G_0\mathcal I_0
+\mathcal I_1^*\Phi^*Q\Phi\mathcal I_1,
\\
\mathbf a
&=\mathcal L_0^*G_0\mathcal Q_0
+\mathcal L_1^*\Phi^*Q\Phi\mathcal Q_1
+\mathcal L_1^*\Phi^*\eta
+S\Phi\mathcal Q_1
+\rho,
\\
\mathbf b
&=\mathcal I_0^*G_0\mathcal Q_0
+\mathcal I_1^*\Phi^*Q\Phi\mathcal Q_1
+\mathcal I_1^*\Phi^*\eta,
\\
\mathbf c
&=\frac12 [[G_0\mathcal Q_0,\mathcal Q_0]]
+\frac12 [[Q\Phi\mathcal Q_1,\Phi\mathcal Q_1]]
+[[\eta,\Phi\mathcal Q_1]].
\end{aligned}
\end{equation}

In particular, if $q=\eta=\rho=0$, then
\[
\mathcal Q_0=0,\qquad \mathcal Q_1=0,
\]
and hence
\[
\mathbf a=0,\qquad \mathbf b=0,\qquad \mathbf c=0.
\]
Therefore,
\[
\mathcal J^0(N,\xi;u)
=\frac12 [[\mathcal Au,u]]
+[[\mathcal B\xi,u]]
+\frac12 [[\mathcal C\xi,\xi]].
\]
    \subsection{Necessary and Sufficient Conditions for Optimal Control}
    \begin{lem}\label{lem:3.2}
            Suppose Assumptions \ref{ass:2.1} and \ref{ass:2.2} hold. Then, for any given terminal state $\xi \in L_{\mathcal{F}_{N-1}}^{2}(\Omega;\mathbb{R}^{n})$, the quadratic functional \eqref{eq:2.2}  is well-defined and continuous in $u$ over $\mathcal{U}$.
    \end{lem}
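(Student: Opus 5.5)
The plan is to rely on Lemma \ref{lem:2.3} and the Hilbert-space representation \eqref{eq:3.1}–\eqref{eq:3.2}. Fix $\xi\in L^2_{\mathcal F_{N-1}}(\Omega;\mathbb R^n)$.

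\textbf{Well-definedness.} I will show that each term in \eqref{eq:2.2} is finite for every $u\in\mathcal U$. By Lemma \ref{lem:2.3}, the state $y=y^{(\xi,u)}$ satisfies the estimate \eqref{eq:2.6}, so $\mathbb E\sum_{k=0}^N|y_k|^2<\infty$. Conditional Jensen gives $\mathbb E|\mathbb E_{k-1}[y_{k+1}]|^2\le \mathbb E|y_{k+1}|^2$. The horizon is finite, and the matrices $G_0,Q_k,S_k,R_k$ are deterministic constants, hence bounded by some $K$. Cauchy--Schwarz then bounds the quadratic part by
\[
K\,\mathbb E\Big[|y_0|^2+\sum_{k=0}^{N-1}\big(|y_{k+1}|^2+|u_k|^2\big)\Big].
\]
The linear part $\mathbb E\sum_k\big(\langle\eta_k,\mathbb E_{k-1}[y_{k+1}]\rangle+\langle\rho_k,u_k\rangle\big)$ is bounded by $\|\eta\|\,\|y\|+\|\rho\|\,\|u\|$, using Assumption \ref{ass:2.2}. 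Hence $\mathcal J(N,\xi;u)$ is a finite real number.

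\textbf{Continuity.} I will use the representation \eqref{eq:3.1}. The maps $\mathcal L_0,\mathcal L_1$ are linear because of the decomposition \eqref{eq:3.decomp}. They are bounded because \eqref{eq:2.6}, applied with $\xi=0$ and $q=0$, gives $\|\mathcal L_0u\|^2+\|\mathcal L_1u\|^2\le L\|u\|^2$. The operator $\Phi$ is a contraction by conditional Jensen, and the multiplication operators $Q,S,R$ are bounded since the matrices are bounded. Consequently $\mathcal A$ is a bounded self-adjoint operator, and $\mathcal B\xi$ and $\mathbf a$ are fixed elements of $\mathcal U$. So $u\mapsto\mathcal J(N,\xi;u)$ is a continuous quadratic form plus a continuous linear functional plus a constant. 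For explicit continuity, for $u,v\in\mathcal U$ one has
\[
|\mathcal J(u)-\mathcal J(v)|\le \tfrac12\|\mathcal A\|\,\|u-v\|\,(\|u\|+\|v\|)+\big(\|\mathcal B\xi\|+\|\mathbf a\|\big)\|u-v\|,
\]
which gives local Lipschitz continuity.

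\textbf{Main obstacle.} The only delicate step is checking that the adjoints and compositions in \eqref{eq:3.2} are legitimate. This means confirming that $\mathcal L_1$ maps into $\mathscr Y$ with the correct measurability, i.e.\ that $y^u_{k+1}$ is $\mathcal F_k$-measurable, which follows from adaptedness in Lemma \ref{lem:2.3}. One also needs the boundedness of $\Phi:\mathscr Y\to\mathscr Z$. Once these are in place, the rest is routine Cauchy--Schwarz.
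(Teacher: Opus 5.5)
Your proposal is correct and follows the paper's proof. Your continuity estimate is the same local Lipschitz bound the paper derives from the representation \eqref{eq:3.1}. The only cosmetic difference is that you check well-definedness directly on \eqref{eq:2.2}, via the estimate \eqref{eq:2.6} and conditional Jensen, whereas the paper bounds each term of \eqref{eq:3.1} using the asserted boundedness of $\mathcal{A},\mathcal{B},\mathcal{C},\mathbf{a},\mathbf{b},\mathbf{c}$; your argument actually supplies the justification of that boundedness, which the paper leaves implicit.
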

        \begin{proof}
        \textbf{(well-defined)} Since the operators $\mathcal{A}$, $\mathcal{B}$, $\mathcal{C}$ and the random variables $\mathbf{a}$, $\mathbf{b}$, $\mathbf{c}$ are linear and bounded, applying the Cauchy-Schwarz inequality yields
        \[
            \begin{aligned}
                |
                &\mathcal{J}(N,\xi,u)| \\\leq& \frac{1}{2} |[[\mathcal{A} u, u]]|+ |[[\mathcal{B} \xi, u]]| + \frac{1}{2} |[[\mathcal{C} \xi, \xi ]]| + |[[\mathbf{a}, u ]]| \\&+ |[[\mathbf{b}, \xi]]| + |\mathbf{c}| \\
            \leq& \frac{1}{2} \|\mathcal{A}\| \|u\|^{2} + \|\mathcal{B}\| \|\xi\| \|u\|+ \frac{1}{2} \|\mathcal{C}\| \|\xi\|^{2} + \|\mathbf{a}\| \|u\|\\& + \|\mathbf{b}\|\|\xi\| + |\mathbf{c}|
            \end{aligned}
        \]
        Therefore, $\mathcal{J}(N,\xi,u)$ is well-defined.

        \textbf{(continuous)}
        Consider two controls $u$ and $\hat{u}\in \mathcal{U}$, and compute the difference:
        \[
            \begin{aligned}
                &
                |\mathcal{J}(N,\xi,u) - \mathcal{J}(N,\xi,\hat{u})| \\=& \left| \frac{1}{2} [[\mathcal{A} u, u]]- \frac{1}{2}[[\mathcal{A} \hat{u}, \hat{u}]] +[[\mathcal{B} \xi, u- \hat{u} ]] + [[\mathbf{a}, u - \hat{u}]]  \right|,
            \end{aligned}
        \]
        where
        \[
            \begin{aligned}
                    &|[[ \mathcal{A} u, u]]  - [[\mathcal{A} \hat{u}, \hat{u}]]|
                    \\=& |[[\mathcal{A} (u - \hat{u}), u]]  + [[\mathcal{A} \hat{u}, u - \hat{u}]] | \\
                    \leq& \|\mathcal{A}\| \|u - \hat{u}\| \|u\| + \|\mathcal{A}\| \|\hat{u}\| \|u - \hat{u}\|\\
                    =& \|\mathcal{A}\| \|u - \hat{u}\| (\|u\| + \|\hat{u}\|),
            \end{aligned}
        \]
        \[[[\mathcal{B} \xi, u - \hat{u}]] \leq \|\mathcal{B}\| \|\xi\| \|u - \hat{u}\|,\]
        \[[[\mathbf{a}, u - \hat{u} ]] \leq \|\mathbf{a}\|\|u - \hat{u}\|.\]
        Combining the estimates above, there exists a constant $\bar{L} > 0$ such that:
        \[
            \begin{aligned}
        &|\mathcal{J}(N,\xi,u) - \mathcal{J}(N,\xi,\hat{u})| \\\leq& \bar{L} \|u - \hat{u}\| \left( \|u\|+ \|\hat{u}\| + \|\xi\| + 1 \right).
        \end{aligned}
        \]
        Therefore, when $\hat{u} \rightarrow u$ in $\mathcal{U}$, the right-hand side tends to 0, hence
        \[\mathcal{J}(N,\xi,\hat{u}) \rightarrow \mathcal{J}(N,\xi,u).\] Continuity is thus proved.
    \end{proof}

    \begin{lem}\label{lem:3.3}
        Let Assumptions \ref{ass:2.1} and \ref{ass:2.2} hold. Then, for any admissible control $u\in\mathcal{U}$, the cost functional $\mathcal{J}(N,\xi;u)$ is Fr\'echet differentiable at $u$, and its derivative is given by
        \begin{equation}\label{eq:3.5}
            [[\mathcal{J}^{\prime}(N,\xi;u),v]]=[[\mathcal{A}u+\mathcal{B}\xi+\mathbf{a},v]],
        \end{equation}
        where $\mathcal{A},\mathcal{B},\mathbf{a}$ are defined by \eqref{eq:3.2}.
    \end{lem}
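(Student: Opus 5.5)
We work directly from the Hilbert-space representation \eqref{eq:3.1}, which expresses $\mathcal J(N,\xi;u)$ as a quadratic form in $u$ with bounded coefficients. The first step is to check that $\mathcal A$ is self-adjoint on $\mathcal U$. The terms $\mathcal L_0^*G_0\mathcal L_0$ and $\mathcal L_1^*\Phi^*Q\Phi\mathcal L_1$ are self-adjoint because $G_0$ and each $Q_k$ are symmetric, so the multiplication operators $G_0$ and $Q$ are self-adjoint for $[[\cdot,\cdot]]$. The same holds for $R$. The two cross terms $\mathcal L_1^*\Phi^*S^\top$ and $S\Phi\mathcal L_1$ are adjoint to each other, where $S^\top$ denotes the multiplication operator $(S^\top v)_k=S_k^\top v_k$, which is the adjoint of $S$. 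If one prefers not to rely on this, one can replace $\mathcal A$ by its symmetric part $\tfrac12(\mathcal A+\mathcal A^*)$, which leaves the quadratic form unchanged. By construction, this symmetrization reproduces the cross term $2[[S\Phi\mathcal L_1u,u]]$ that appears in \eqref{eq:3.1}.

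The second step is to fix $u,v\in\mathcal U$ and expand $\mathcal J(N,\xi;u+v)-\mathcal J(N,\xi;u)$ using bilinearity of $[[\cdot,\cdot]]$:
\[
\mathcal J(N,\xi;u+v)-\mathcal J(N,\xi;u)=[[\mathcal Au+\mathcal B\xi+\mathbf a,v]]+\tfrac12[[\mathcal Av,v]].
\]
Self-adjointness is used here to combine $\tfrac12[[\mathcal Au,v]]$ and $\tfrac12[[\mathcal Av,u]]$ into $[[\mathcal Au,v]]$. The terms $\tfrac12[[\mathcal C\xi,\xi]]$, $[[\mathbf b,\xi]]$ and $\mathbf c$ do not depend on $u$, so they cancel.

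The third step is the remainder estimate. By Cauchy--Schwarz, $|\tfrac12[[\mathcal Av,v]]|\le\tfrac12\|\mathcal A\|\|v\|^2=o(\|v\|)$. The map $v\mapsto[[\mathcal Au+\mathcal B\xi+\mathbf a,v]]$ is bounded and linear, because $\mathcal A$ and $\mathcal B$ are bounded (this follows from Lemma \ref{lem:2.3} and the boundedness of the weights) and $\mathbf a\in\mathcal U$. Hence $\mathcal J$ is Fr\'echet differentiable at $u$. By the Riesz representation theorem, its gradient is $\mathcal J'(N,\xi;u)=\mathcal Au+\mathcal B\xi+\mathbf a$, which is \eqref{eq:3.5}.

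The only step requiring any care is the self-adjointness of $\mathcal A$, and specifically the interpretation of the cross-term operators between $\mathscr Z$ and $\mathcal U$. One has to confirm that the adjoint of $S\Phi\mathcal L_1$ is $\mathcal L_1^*\Phi^*S^*$, with $S^*$ acting as pointwise multiplication by $S_k^\top$. This amounts to the identity $\mathbb E\sum_k\langle S_k\phi_k,u_k\rangle=\mathbb E\sum_k\langle\phi_k,S_k^\top u_k\rangle$, which is immediate from the definitions. It is also worth noting that $\mathbf a$ genuinely belongs to $\mathcal U$, so that it can serve as a gradient. This holds because $\mathcal L_0^*$ and $\mathcal L_1^*$ map into $\mathcal U$, and $\eta,\rho$ are square integrable by Assumption \ref{ass:2.2}.
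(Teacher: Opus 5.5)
Your proof is correct and follows the same route as the paper's: expand $\mathcal J(N,\xi;u+v)-\mathcal J(N,\xi;u)$ via \eqref{eq:3.1} into the linear term $[[\mathcal Au+\mathcal B\xi+\mathbf a,v]]$ plus the remainder $\tfrac12[[\mathcal Av,v]]$, then bound the remainder by $\tfrac12\|\mathcal A\|\|v\|^2$. Your explicit checks fill in steps the paper uses tacitly: that $\mathcal A$ is self-adjoint (reading $S^\top$ as the adjoint multiplication operator $S^*$), which is needed to merge the two cross terms into $[[\mathcal Au,v]]$, and that $\mathbf a\in\mathcal U$.
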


    \begin{proof}
        For all $\delta \in (0,1)$, applying \eqref{eq:3.1}, we have
        \begin{equation}\label{eq:3.6}
            \mathcal{J}(N,\xi;u+\delta v)-\mathcal{J}(N,\xi;u)=\delta\mathscr{L}^{(u,v)}+\delta^{2}\mathcal{J}^{0}(N,0;v).
        \end{equation}
        Where
        \[
            \mathscr{L}^{(u,v)} = [[\mathcal{A}u+\mathcal{B}\xi+\mathbf{a},v]],
        \]
        \[
            \mathcal{J}^{0}(N,0;v)=\frac{1}{2}[[\mathcal{A}v,v]].
        \]
        Especially, setting $\delta=1$, we can obtain
        \[
            \mathcal{J}(N,\xi;u+v)-\mathcal{J}(N,\xi;u)=\mathscr{L}^{(u,v)}+\mathcal{J}^{0}(N,0;v).
        \]
        Due to the boundedness of $\mathcal{A}$, there exists a constant $M>0$ such that
        \[
            \mathcal{J}^{0}(N,0;v)=\frac{1}{2}[[\mathcal{A}v,v]]\leq\frac{M}{2}\|v\|^{2}.
        \]
        Therefore
        \[
            \begin{aligned}
            \frac{\left|\mathcal{J}(N,\xi;u+v)-\mathcal{J}(N,\xi;u)-\mathscr{L}^{(u,v)}\right|}{\|v\|} \leq \frac{M}{2} \|v\| \to 0 \\\text{as} \quad \|v\| \to 0.
            \end{aligned}
            \]
        This satisfies the definition of Fréchet differentiability, and its derivative is given by:
        \[
            [[\mathcal{J}'(N,\xi;u), v]] = \mathscr{L}^{(u,v)} = [[\mathcal{A}u+\mathcal{B}\xi+\mathbf{a},v]],
        \]
        which implies that \eqref{eq:3.5} holds.
    \end{proof}

    \begin{cor}\label{cor:3.3}
        Let Assumptions \ref{ass:2.1} -- \ref{ass:2.2} hold. Then, for any $\xi \in L_{\mathcal{F}_{N-1}}^{2}(\Omega;\mathbb{R}^{n})$, $\delta \in \mathbb{R}$, and $u, v \in \mathcal{U}$, the following relationship between $\mathcal{J}(N, \xi; u + \delta v)$ and $\mathcal{J}(N, \xi; u)$ holds:
        \begin{equation}\label{eq:3.4}
        \begin{aligned}
            &\mathcal{J}(N,\xi;u+\delta v)\\=&\mathcal{J}(N,\xi;u)+\delta^{2}\mathcal{J}^{0}(N,0;v)+\delta[[\mathcal{J}^{\prime}(N,\xi;u),v]],
        \end{aligned}
        \end{equation}
        where $[[\mathcal{J}^{\prime}(N,\xi;u),v]]$ given by \eqref{eq:3.5} is also the G\^ateaux derivative of $\mathcal{J}(N,\xi;u)$ at $u$ in the direction of $v$.
    \end{cor}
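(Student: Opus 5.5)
The plan is to expand the quadratic representation \eqref{eq:3.1} directly at the point $u+\delta v$. Because $\mathcal{A}$, $\mathcal{B}$, $\mathcal{C}$, $\mathbf a$, $\mathbf b$, $\mathbf c$ do not depend on the control, substituting $u+\delta v$ and using bilinearity of $[[\cdot,\cdot]]$ gives
\[
\begin{aligned}
\mathcal J(N,\xi;u+\delta v)
={}&\mathcal J(N,\xi;u)
+\frac{\delta}{2}\big([[\mathcal A u,v]]+[[\mathcal A v,u]]\big)\\
&+\delta[[\mathcal B\xi,v]]+\delta[[\mathbf a,v]]
+\frac{\delta^2}{2}[[\mathcal A v,v]].
\end{aligned}
\]
The terms $\frac12[[\mathcal C\xi,\xi]]$, $[[\mathbf b,\xi]]$ and $\mathbf c$ are the same on both sides and cancel.

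To merge the two mixed terms, I would show that $\mathcal A$ is self-adjoint. In \eqref{eq:3.2}, the terms $\mathcal L_0^*G_0\mathcal L_0$ and $\mathcal L_1^*\Phi^*Q\Phi\mathcal L_1$ are self-adjoint because $G_0$ and each $Q_k$ are symmetric, so the multiplication operator $Q$ is self-adjoint on $\mathscr Z$. The operator $R$ is self-adjoint for the same reason. The cross terms $\mathcal L_1^*\Phi^*S^\top+S\Phi\mathcal L_1$ are adjoint to each other, interpreting $S^\top$ as $S^*$, the pointwise multiplication by $S_k^\top$. This step is the main place where care is needed: one has to check from the original cost \eqref{eq:2.2} that the cross term really equals $[[S\Phi\mathcal L_1u,u]]$ written in symmetrized form. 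Even without symmetry, replacing $\mathcal A$ by $\frac12(\mathcal A+\mathcal A^*)$ leaves every quadratic form $[[\mathcal A u,u]]$ unchanged, so the identity holds either way. With self-adjointness, the mixed terms combine to $\delta[[\mathcal A u,v]]$.

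Collecting terms, the first-order part is $\delta[[\mathcal A u+\mathcal B\xi+\mathbf a,v]]$, which is $\delta[[\mathcal J'(N,\xi;u),v]]$ by Lemma \ref{lem:3.3}. The second-order part $\frac{\delta^2}{2}[[\mathcal Av,v]]$ is $\delta^2\mathcal J^0(N,0;v)$, by the homogeneous formula at the end of Section 3 with $\xi=0$. This proves \eqref{eq:3.4} for every real $\delta$; nothing requires $\delta\in(0,1)$, because the expansion is an exact algebraic identity.

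For the G\^ateaux claim, I would divide \eqref{eq:3.4} by $\delta\neq0$ to get
\[
\frac{\mathcal J(N,\xi;u+\delta v)-\mathcal J(N,\xi;u)}{\delta}=[[\mathcal J'(N,\xi;u),v]]+\delta\,\mathcal J^0(N,0;v).
\]
Letting $\delta\to0$, the last term vanishes because $\mathcal J^0(N,0;v)$ is finite by Lemma \ref{lem:3.2}. Hence the directional derivative exists and equals \eqref{eq:3.5}. This also follows directly from Fr\'echet differentiability in Lemma \ref{lem:3.3}.
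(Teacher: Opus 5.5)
Your proposal is correct and is essentially the paper's argument: the paper obtains \eqref{eq:3.4} by expanding the quadratic representation \eqref{eq:3.1} (this is \eqref{eq:3.6} in the proof of Lemma~\ref{lem:3.3}), and it gets the G\^ateaux claim by dividing by $\delta$ and letting $\delta\to0$. You also verify that $\mathcal A$ is self-adjoint and note that the expansion holds for every real $\delta$, not only $\delta\in(0,1)$, which fills in details the paper leaves implicit; however, your aside that the identity holds ``either way'' is true only because the $\mathcal A$ of \eqref{eq:3.2} is self-adjoint (otherwise the first-order term would involve $\tfrac12(\mathcal A+\mathcal A^*)$ rather than the $\mathcal A$ in \eqref{eq:3.5}), so that self-adjointness check is genuinely needed rather than optional.
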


    \begin{proof}
        From Lemma \ref{lem:3.3}, we have obtained \eqref{eq:3.4}. From equation \eqref{eq:3.6}, we obtain:
        \[
            \begin{aligned}
            &\lim_{\delta \to 0} \frac{\mathcal{J}(N,\xi;u+\delta v)-\mathcal{J}(N,\xi;u)}{\delta}\\=&\mathscr{L}^{(u,v)}=[[\mathcal{J}^{\prime}(N,\xi;u),v]],
            \end{aligned}
            \]
        which coincides with the Fréchet derivative. Therefore, the expression in equation \eqref{eq:3.5} is also the Gâteaux derivative of $\mathcal{J}$ at $u$ in the direction $v$.
    \end{proof}

    \begin{lem}\label{lem:3.4}
        Let Assumptions \ref{ass:2.1} -- \ref{ass:2.2} hold. Suppose there exists a constant $r > 0$ such that $\mathcal{J}^{0}(N,0;u)$ satisfies
        \begin{equation}\label{eq:3.7}
            \mathcal{J}^{0}(N,0;u) \geq r \mathbb{E}\left[\sum_{k=0}^{N-1} |u_k|^2\right] = r \|u\|_{\mathcal{U}}^2 \quad \forall u \in \mathcal{U}
        \end{equation}
        then the cost functional $\mathcal{J}(N,\xi;u)$ is strictly convex and coercive on the control space $\mathcal{U}$.
    \end{lem}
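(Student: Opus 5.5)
The plan is to rely entirely on the quadratic representation \eqref{eq:3.1} together with the expansion in Corollary \ref{cor:3.3}. The hypothesis \eqref{eq:3.7} is a uniform positivity statement for the operator $\mathcal A$. Since $\mathcal J^0(N,0;u)=\frac12[[\mathcal Au,u]]$, it reads $[[\mathcal Au,u]]\ge 2r\|u\|_{\mathcal U}^2$ for all $u\in\mathcal U$. Both convexity and coercivity will be read off from this inequality.

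\emph{Strict convexity.} Take $u,\hat u\in\mathcal U$ with $u\neq \hat u$ and $\lambda\in(0,1)$, and set $v=\hat u-u$. By \eqref{eq:3.4}, $g(\delta):=\mathcal J(N,\xi;u+\delta v)$ is the real quadratic
\[
g(\delta)=\mathcal J(N,\xi;u)+\delta[[\mathcal J'(N,\xi;u),v]]+\delta^2\mathcal J^0(N,0;v).
\]
Its leading coefficient satisfies $\mathcal J^0(N,0;v)\ge r\|v\|^2>0$, so $g$ is strictly convex on $\mathbb R$. A direct computation with the quadratic gives
\[
\lambda g(0)+(1-\lambda)g(1)-g(1-\lambda)=\lambda(1-\lambda)\mathcal J^0(N,0;v)\ge \lambda(1-\lambda)r\|v\|^2>0.
\]
Since $g(1-\lambda)=\mathcal J(N,\xi;\lambda u+(1-\lambda)\hat u)$, this is exactly strict convexity of $\mathcal J$, and in fact the quantitative version with modulus $r$.

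\emph{Coercivity.} Apply \eqref{eq:3.1} directly, bounding the linear terms with Cauchy--Schwarz and the operator norms already used in Lemma \ref{lem:3.2}:
\[
\mathcal J(N,\xi;u)\ge r\|u\|^2-\big(\|\mathcal B\|\|\xi\|+\|\mathbf a\|\big)\|u\|+\tfrac12[[\mathcal C\xi,\xi]]+[[\mathbf b,\xi]]+\mathbf c .
\]
For fixed $\xi$ the last three terms form a finite constant, so the right-hand side tends to $+\infty$ as $\|u\|_{\mathcal U}\to\infty$. This is coercivity.

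The only point needing care is the identification $\mathcal J^0(N,0;u)=\frac12[[\mathcal Au,u]]$, which underlies both steps. It holds because, with $\xi=0$ and $q=\eta=\rho=0$, we have $\mathcal Q_0=0$ and $\mathcal Q_1=0$, so $\mathbf a,\mathbf b,\mathbf c$ all vanish, as noted after \eqref{eq:3.2}. There is one subtlety: the expansion in Corollary \ref{cor:3.3} uses $\mathcal J^0(N,0;v)$ with the homogeneous data, while $\mathcal J$ carries the inhomogeneous terms. This is consistent because all inhomogeneous contributions are affine in $u$ and cancel in the second-order term. Everything else is routine estimation.
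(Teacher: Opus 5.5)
Your proposal is correct and follows essentially the same route as the paper: both rest on the affine-plus-quadratic decomposition $\mathcal{J}(N,\xi;u)=\mathcal{J}(N,\xi;0)+\mathcal{J}^0(N,0;u)+[[\mathcal{J}'(N,\xi;0),u]]$, which comes from Corollary \ref{cor:3.3} (equivalently \eqref{eq:3.1}), with \eqref{eq:3.7} yielding both strict convexity and coercivity. Your gap identity $\lambda\mathcal{J}(N,\xi;u)+(1-\lambda)\mathcal{J}(N,\xi;\hat u)-\mathcal{J}(N,\xi;\lambda u+(1-\lambda)\hat u)=\lambda(1-\lambda)\mathcal{J}^0(N,0;\hat u-u)$ and your Cauchy--Schwarz lower bound spell out what the paper states more tersely, namely that a quadratic form satisfying \eqref{eq:3.7} is strictly convex and that the limit in the coercivity step is $+\infty$.
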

    \begin{proof}
       By Corollary \ref{cor:3.3}, we set $v \in \mathcal{U}$, $\delta = 1$, $u = (0, \dots, 0)$, and replace \(v\) with \(u\) (since \(v\) is arbitrary, this replacement is valid). Thus, we obtain:
        \[
            \mathcal{J}(N, \xi; v) = \mathcal{J}(N, \xi; 0) + \mathcal{J}^{0}(N, 0; v) + [[ \mathcal{J}'(N, \xi; 0),v ]].
        \]
        Due to the arbitrariness of $v$, the cost functional can also be written as
        \begin{equation}\label{eq:3.8}
            \mathcal{J}(N, \xi; u) = \mathcal{J}(N, \xi; 0) + \mathcal{J}^{0}(N, 0; u) + [[\mathcal{J}'(N, \xi; 0), u]].
        \end{equation}
        The expression $\mathcal{J}(N,\xi;0) +[[\mathcal{J}'(N,\xi;0), u]]$ is linear in $u$, and hence convex. Since $\mathcal{J}^0(N,0;u)$ is a quadratic functional and satisfies \eqref{eq:3.7}, it follows that $\mathcal{J}^0(N,0;u)$ is strictly convex. Therefore, from \eqref{eq:3.8} we conclude that $\mathcal{J}(N,\xi;u)$ is strictly convex.

        Combining \eqref{eq:3.7} and  \eqref{eq:3.8}, it follows that
         \[
            \begin{aligned}
            &\displaystyle\lim_{\|v\|_{\mathcal{U}}\rightarrow \infty}\mathcal{J}(N,\xi;v)\\=&\displaystyle\lim_{\|v\|_{\mathcal{U}}\rightarrow \infty}\{\mathcal{J}(N,\xi;0)+\mathcal{J}^0(N,0;v)+[[\mathcal{J}'(N,\xi;0), v]]\}\\=&\infty,
            \end{aligned}
            \]
        which implies that  the cost functional   $\mathcal{J}(N,\xi;u)$ is  coercive with respect to  $u$ over $ \mathcal{U}$.
        The proof is complete.
    \end{proof}

    \begin{thm}\label{thm:3.5}
    Let Assumptions \ref{ass:2.1} -- \ref{ass:2.2} hold. Then,  for a given terminal state $\xi \in L^{2}_{\mathcal{F}_{N-1}}(\Omega;\mathbb{R}^{n})$, a control $u^{*} \in \mathcal{U}$ is optimal if and only if
    \begin{enumerate}
        \item[(i)] $\mathcal{J}^{0}(N,0;v)\geq 0,\quad\forall v \in \mathcal{U}$;
        \item[(ii)] $[[\mathcal{J}^{\prime}(N,\xi;u^{*}), v ]]= 0,\quad\forall v \in \mathcal{U}$.
    \end{enumerate}
\end{thm}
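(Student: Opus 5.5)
The whole argument rests on the exact quadratic expansion in Corollary \ref{cor:3.3}. For any $u^*,v\in\mathcal U$ and $\delta\in\mathbb R$,
\[
\mathcal J(N,\xi;u^*+\delta v)-\mathcal J(N,\xi;u^*)=\delta^{2}\mathcal J^{0}(N,0;v)+\delta[[\mathcal J'(N,\xi;u^*),v]].
\]
Both directions of the theorem follow by reading off the sign of this real quadratic polynomial in $\delta$. No compactness or coercivity is needed, because the theorem characterizes a given optimal control rather than asserting that one exists.

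\textbf{Sufficiency.} Assume (i) and (ii), and let $u\in\mathcal U$ be arbitrary. Set $v:=u-u^*\in\mathcal U$, which is admissible because $\mathcal U$ is a linear space, and take $\delta=1$. The expansion then gives
\[
\mathcal J(N,\xi;u)=\mathcal J(N,\xi;u^*)+\mathcal J^0(N,0;v)+[[\mathcal J'(N,\xi;u^*),v]].
\]
The last term vanishes by (ii), and the middle term is nonnegative by (i). Hence $\mathcal J(N,\xi;u)\ge \mathcal J(N,\xi;u^*)$ for every $u\in\mathcal U$, so $u^*$ is optimal.

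\textbf{Necessity.} Assume $u^*$ is optimal and fix $v\in\mathcal U$. The function $f(\delta):=\delta^2\mathcal J^0(N,0;v)+\delta[[\mathcal J'(N,\xi;u^*),v]]$ satisfies $f(\delta)\ge 0$ for all $\delta\in\mathbb R$, and $f(0)=0$.

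First, $\delta=0$ is a minimizer of $f$ and $f$ is differentiable, so $f'(0)=[[\mathcal J'(N,\xi;u^*),v]]=0$. Alternatively, divide $f(\delta)\ge0$ by $\delta>0$ and by $\delta<0$ and let $\delta\to0^{\pm}$; this gives both inequalities, which together give (ii).

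Second, with the linear term now known to be zero, $f(\delta)=\delta^2\mathcal J^0(N,0;v)\ge 0$. Taking $\delta=1$ gives $\mathcal J^0(N,0;v)\ge0$, which is (i). Since $v$ was arbitrary, both conditions hold.

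\textbf{Main point to verify.} The only step needing care is that the expansion in Corollary \ref{cor:3.3} is exact and holds for all real $\delta$, not just $\delta\in(0,1)$. This follows from \eqref{eq:3.1}. Since $\mathcal A$ is self-adjoint (after symmetrizing, $[[\mathcal A u,u]]$ only sees the symmetric part), we have
\[
\tfrac12[[\mathcal A(u+\delta v),u+\delta v]]=\tfrac12[[\mathcal Au,u]]+\delta[[\mathcal Au,v]]+\tfrac{\delta^2}{2}[[\mathcal Av,v]].
\]
The terms $[[\mathcal B\xi,\cdot]]$ and $[[\mathbf a,\cdot]]$ are linear in the control. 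Together these give the identity for all real $\delta$.

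A second point to check is that the directional derivative identity uses $\mathcal J^0(N,0;v)=\tfrac12[[\mathcal Av,v]]$. This is immediate from \eqref{eq:3.1} with $\xi=0$ and $q=\eta=\rho=0$.
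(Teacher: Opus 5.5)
Your proposal is correct and follows essentially the same route as the paper. Both directions are read off from the exact quadratic expansion of Corollary~\ref{cor:3.3}: sufficiency takes $\delta=1$ in the direction $v-u^*$, and necessity first forces the linear term to vanish (the paper via one-sided G\^ateaux limits in the directions $\pm v$, you via $f'(0)=0$) and then uses nonnegativity of the remaining $\delta^2\mathcal{J}^0(N,0;v)$ term. Your explicit check that the expansion holds for all real $\delta$, using self-adjointness of $\mathcal{A}$ from \eqref{eq:3.2}, is a worthwhile tightening, since \eqref{eq:3.6} is only stated for $\delta\in(0,1)$ and the paper passes over this point quickly.
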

    \begin{proof}
        \textbf{(Necessity)}
        Assume $u^{*}$ is optimal. Let $v \in \mathcal{U}$ be an arbitrary admissible control and $\lambda\in\mathbb{R}$. Consider the controls $u^{*} + \lambda v$ and $u^{*} - \lambda v$. We obtain
        \[
            \mathcal{J}(N,\xi;u^{*}+\lambda v) - \mathcal{J}(N,\xi;u^{*})\geq 0.
        \]
        \[
            \mathcal{J}(N,\xi;u^{*}-\lambda v) - \mathcal{J}(N,\xi;u^{*})\geq 0.
        \]
        According to the definition of the Gâteaux derivative, the derivative in the direction $v$ is
        \[[[\mathcal{J}^{\prime}(N, \xi; u^{*}), v]] = \lim_{\lambda \to 0^{+}} \frac{\mathcal{J}(N, \xi; u^{*} + \lambda v) - \mathcal{J}(N, \xi; u^{*})}{\lambda}.\]
        From this, we obtain
        \begin{equation}\label{eq:3.9}
            \begin{aligned}
                [[\mathcal{J}^{\prime}(N, \xi; u^{*}), v]] &\geq 0,  \\
                [[\mathcal{J}^{\prime}(N, \xi; u^{*}), -v]] &\geq 0.
            \end{aligned}
        \end{equation}
        Since $[[\mathcal{J}^{\prime}(N, \xi; u^{*}), v]]$ is linear with respect to the direction $v$, we have
        \begin{equation}\label{eq:3.10}
            [[\mathcal{J}^{\prime}(N, \xi; u^{*}), -v]] = -[[\mathcal{J}^{\prime}(N, \xi; u^{*}), v]] \geq 0.
        \end{equation}
        Combining  \eqref{eq:3.9} and \eqref{eq:3.10},
        \[0 \leq [[\mathcal{J}^{\prime}(N, \xi; u^{*}), v]] \leq 0,\]
        thus,
        \[[[\mathcal{J}^{\prime}(N,\xi;u^{*}), v ]]= 0, \quad \forall v \in \mathcal{U}.\]

        We next prove (i).
        According to \eqref{eq:3.4}, we have
        \[
        \begin{aligned}
        &\mathcal{J}(N,\xi;u^{*}+\lambda v) \\=& \mathcal{J}(N,\xi;u^{*}) + \lambda^{2}\mathcal{J}^{0}(N,0;v)+ \lambda[[\mathcal{J}^{\prime}(N,\xi;u^{*}), v]],
        \end{aligned}
        \]
        By the optimality of $u^{*}$, we have
\[
        \begin{aligned}
        &\mathcal{J}(N,\xi;u^{*}+\lambda v)-\mathcal{J}(N,\xi;u^{*})\\=&\lambda^{2}\mathcal{J}^{0}(N,0;v)+ \lambda[[\mathcal{J}^{\prime}(N,\xi;u^{*}), v]]\\\geq &0.
        \end{aligned}
        \]
        Substituting (ii) into the expansion and dividing by $\lambda^2>0$ yields
        \[
        \mathcal{J}^0(N,0;v) \ge 0.
        \]

        \textbf{(Sufficiency)}
        Assume (i) and (ii) hold. By Corollary \ref{cor:3.3}, for any admissible control $v\in\mathcal{U}$,
\[
            \begin{aligned}
&\mathcal{J}(N,\xi;v)
\\=&\mathcal{J}(N,\xi;u^*)+\mathcal{J}^0(N,0;v-u^*)
+[[\mathcal{J}'(N,\xi;u^*),v-u^*]].
            \end{aligned}
            \]
Since
\[
\begin{aligned}
\mathcal{J}^0(N,0;v-u^*)\ge0,\quad [[\mathcal{J}'(N,\xi;u^*),v-u^*]]=0,
\end{aligned}
\]
 we get
\[
\mathcal{J}(N,\xi;v)\ge \mathcal{J}(N,\xi;u^*).
\]
        Hence, $u^{*}$ is optimal.
\end{proof}

    \begin{rmk}
        According to Theorem \ref{thm:3.5},
        \[\mathcal{J}^{0}(N,0;u) = \frac{1}{2} [[\mathcal{A}u, u]]   \geq 0, \quad \forall u \in \mathcal{U}\]
        (equivalent to the positivity condition $\mathcal{A} \geq 0$) is a necessary condition for the existence of an optimal control.

        If the operator $\mathcal{A}$ is uniformly positive, i.e., there exists a constant $r > 0$ such that
        \[\mathcal{J}^{0}(N, 0; u) = \frac{1}{2} [[\mathcal{A} u, u]] \geq r \, \mathbb{E}\left[ \sum_{k=0}^{N-1} |u_k|^2 \right] = r \|u\|_{\mathcal{U}}^2,\]
        Lemmas \ref{lem:3.2}--\ref{lem:3.4} confirm $\mathcal{J}(N, \xi; u)$ is well-defined, continuous, strictly convex, and coercive in $\mathcal{U}$ for $u$.
By Proposition 2.12 \cite{EkeTem:76}, any $\xi \in L_{\mathcal{F}_{N-1}}^{2}(\Omega;\mathbb{R}^{n})$ gives a unique optimal control $u^*$. According to Theorem \ref{thm:3.5}, we have
        \[[[ \mathcal{J}'(N, \xi; u^{*}), v ]] = [[\mathcal{A}u^{*}+\mathcal{B}\xi+\mathbf{a},v]]= 0, \quad \forall v \in \mathcal{U}.\]
        Since $\mathcal{A}$ is uniformly positive, the optimal control uniquely exists and is given by:
        \[u^{*} = -\mathcal{A}^{-1}(\mathcal{B}\xi + \mathbf{a}).\]
        \end{rmk}
    In this paper, we make the following assumption:
    \begin{ass}\label{ass:3.1}
        For \( k \in \mathcal{T} \), the following conditions hold:
        \[
            \begin{cases}
                G_0\ge 0, R_k \gg 0, \\
                Q_k - S_k^\top R_k^{-1} S_k \ge 0, & k \in \mathcal{T}.
            \end{cases}
        \]
    \end{ass}

    	\section{Stochastic Hamiltonian System and Stochastic Riccati equation }
	This section derives the explicit solution to the discrete-time BSLQ optimal control problem. We first establish the maximum principle for the simplified system and deduce the coupled discrete-time stochastic Hamiltonian system (a set of forward and backward stochastic difference equations (FBS$\Delta$Es)). After performing equivalent transformation on the original problem, we apply a decoupling strategy to derive the discrete Riccati equation, present the optimal control’s explicit state feedback law, and finally obtain the analytical expression of the value function.

    \subsection{Stochastic Hamiltonian System}
	To further analyze the optimal control problem, we introduce the adjoint equation and the Hamiltonian function.
    The adjoint equation associated with the state equation \eqref{eq:2.1} is determined by the following FS$\Delta$E:
    \begin{equation}\label{eq:4.1}
        \left\{\begin{array}{ll}
            x_{k+1} = A_k^\top x_k + Q_k \mathbb{E}_{k-1}[y_{k+1}] + S_k^\top u_k + \eta_k \\\quad\quad \quad+ C_k^\top x_k \omega_k,
            \\x_0 = G_0 y_0, \quad\quad k \in \mathcal{T},
        \end{array}
    \right.
    \end{equation}
    The Hamiltonian function $\mathcal{H}$ is defined as:
    \[\mathcal{H} :\mathcal{T} \times \mathbb{R}^n \times \mathbb{R}^{m} \times \mathbb{R}^n \times \mathbb{R}^n\to \mathbb{R},\]
    and given by:
    \[
    \begin{aligned}
        &\mathcal{H}(k, x, u, y,z)\\ =& \ \left\langle  A_k y+ B_k u + C_k z+ q_k , x \right\rangle+ \frac{1}{2}[ \left\langle Q_k  y, y \right\rangle \\&+ 2 \left\langle S_k y,u \right\rangle + \left\langle R_k u, u \right\rangle + 2 \left\langle \eta_k, y \right\rangle + 2 \left\langle \rho_k, u \right\rangle ].
    \end{aligned}
    \]
    Using the Hamiltonian, the adjoint equation \eqref{eq:4.1} can be rewritten as:
    \begin{equation}\label{eq:4.2}
        \left\{\begin{array}{ll}
             x_{k+1} = \mathcal{H}_y(k) + \mathcal{H}_z(k) \omega_k,
            \\ x_0 = G_0 y_0, \quad k \in \mathcal{T},
        \end{array}
    \right.
    \end{equation}
    where the partial derivatives of the Hamiltonian function are given by:
    \[
    \begin{aligned}
        \mathcal{H}_y(k)&=\frac{\partial \mathcal{H}}{\partial y}(k, x_{k}, u_k, \mathbb{E}_{k-1}[y_{k+1}],\mathbb{E}_{k-1}[y_{k+1} \omega_{k}])\\&=A_k^{\top} x_{k}+Q_{k} \mathbb{E}_{k-1}[y_{k+1}]+S_{k}^{\top} u_k+\eta_{k}
    \end{aligned}
    \]
    \[
    \begin{aligned}
        \mathcal{H}_z(k)&=\frac{\partial \mathcal{H}}{\partial z}(k, x_{k}, u_k, \mathbb{E}_{k-1}[y_{k+1}],\mathbb{E}_{k-1}[y_{k+1} \omega_{k}])\\&=C_k^{\top} x_{k}
    \end{aligned}
    \]
    We then apply the Pontryagin maximum principle: at optimal control, the Hamiltonian’s gradient with respect to control variable $u$ is zero, as expressed by the following stationarity condition.
     \begin{thm}\label{thm:4.1}
     Let Assumptions \ref{ass:2.1}, \ref{ass:2.2} and \ref{ass:3.1} hold, and let $\xi \in L_{\mathcal{F}_{N-1}}^2(\Omega; \mathbb{R}^n)$ be a given terminal state.
The control process $u^* = \{u_k^*\}_{k \in \mathcal{T}}$ is optimal with respect to $\xi$ if and only if
there exists a pair $(x^*, y^*)$ satisfying the Hamiltonian system \eqref{eq:4.3} and the stationarity condition \eqref{eq:4.4}.
        \begin{equation}\label{eq:4.3}
            \left\{\begin{array}{ll}
                 x_{k+1}^* = A_k^{\top} x_k^* + Q_k \mathbb{E}_{k-1}[y_{k+1}^*] + S_k^{\top} u_k^* + \eta_k \\\quad\quad \quad+ C_k^{\top} x_k^* \omega_{k},
                \\y_k^* = A_k \mathbb{E}_{k-1}[y_{k+1}^*] + B_k u_k^* \\\quad\quad+ C_k \mathbb{E}_{k-1}[y_{k+1}^* \omega_{k}] + q_k,
                \\x_0^* = G_0 y_0^*, \quad y_N^* = \xi, \quad k \in \mathcal{T}.
            \end{array}\right.
        \end{equation}
            \begin{equation}\label{eq:4.4}
                B_k^\top x_k^* + S_k \mathbb{E}_{k-1}[y_{k+1}^*] + R_k u^*_k + \rho_k = 0, \quad k \in \mathcal{T}.
            \end{equation}
    \end{thm}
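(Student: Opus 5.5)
We prove Theorem \ref{thm:4.1} by reducing it to Theorem \ref{thm:3.5}. Condition (i) there is automatic under Assumption \ref{ass:3.1}, and condition (ii) is exactly the stationarity condition \eqref{eq:4.4}, once the Fréchet derivative $\mathcal{A}u^*+\mathcal{B}\xi+\mathbf a$ is expressed through the adjoint process $x^*$ of \eqref{eq:4.1}.

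\textbf{Step 1 (condition (i)).} For $\xi=0$ and $q=\eta=\rho=0$, complete the square in the running cost:
\[
\langle Q_k y,y\rangle+2\langle S_k y,v\rangle+\langle R_k v,v\rangle
=\langle (Q_k-S_k^\top R_k^{-1}S_k)y,y\rangle+\langle R_k(v+R_k^{-1}S_ky),\,v+R_k^{-1}S_ky\rangle\ge 0,
\]
with $y=\mathbb E_{k-1}[y_{k+1}]$. Together with $G_0\ge0$, this gives $\mathcal J^0(N,0;v)\ge0$ for every $v\in\mathcal U$. Hence, by Theorem \ref{thm:3.5}, $u^*$ is optimal if and only if $[[\mathcal J'(N,\xi;u^*),v]]=0$ for all $v\in\mathcal U$.

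\textbf{Step 2 (derivative via a duality identity).} Given $u^*$, let $y^*$ be the state from Lemma \ref{lem:2.3}. Then define $x^*$ forward by \eqref{eq:4.3}; it is explicit, with $x^*_k$ being $\mathcal F_{k-1}$-measurable and square integrable. So the pair $(x^*,y^*)$ always exists, and the whole content of the theorem lies in \eqref{eq:4.4}.

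By \eqref{eq:3.6}, the Gâteaux derivative in direction $v$ is
\[
\mathbb E\Big\{\langle G_0y_0^*,\hat y_0\rangle+\sum_{k}\big[\langle Q_k\mathbb E_{k-1}[y^*_{k+1}]+S_k^\top u_k^*+\eta_k,\mathbb E_{k-1}[\hat y_{k+1}]\rangle+\langle S_k\mathbb E_{k-1}[y^*_{k+1}]+R_ku^*_k+\rho_k,v_k\rangle\big]\Big\},
\]
where $\hat y=y^v$ solves \eqref{eq:2.4} with $\hat y_N=0$ and control $v$. The key step is the duality identity obtained by expanding $\mathbb E\langle x^*_{k+1},\hat y_{k+1}\rangle-\mathbb E\langle x^*_k,\hat y_k\rangle$:

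\begin{enumerate}
\item[(a)] Substitute $\hat y_k=A_k\mathbb E_{k-1}[\hat y_{k+1}]+B_kv_k+C_k\mathbb E_{k-1}[\hat y_{k+1}\omega_k]$.
\item[(b)] Use the $\mathcal F_{k-1}$-measurability of $x_k^*$ to get $\mathbb E\langle A_k^\top x_k^*,\hat y_{k+1}\rangle=\mathbb E\langle x_k^*,A_k\mathbb E_{k-1}[\hat y_{k+1}]\rangle$.
\item[(c)] Likewise get $\mathbb E\langle C_k^\top x_k^*\omega_k,\hat y_{k+1}\rangle=\mathbb E\langle x_k^*,C_k\mathbb E_{k-1}[\hat y_{k+1}\omega_k]\rangle$.
\item[(d)] Replace $\hat y_{k+1}$ by $\mathbb E_{k-1}[\hat y_{k+1}]$ inside the $Q,S,\eta$ terms, since their coefficients are $\mathcal F_{k-1}$-measurable.
\end{enumerate}

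This yields
\[
\mathbb E\langle x^*_{k+1},\hat y_{k+1}\rangle-\mathbb E\langle x^*_k,\hat y_k\rangle
=\mathbb E\langle Q_k\mathbb E_{k-1}[y^*_{k+1}]+S_k^\top u_k^*+\eta_k,\mathbb E_{k-1}[\hat y_{k+1}]\rangle-\mathbb E\langle B_k^\top x_k^*,v_k\rangle .
\]
Summing over $k\in\mathcal T$ and using $\hat y_N=0$ and $x_0^*=G_0y_0^*$ gives
\[
[[\mathcal J'(N,\xi;u^*),v]]=\mathbb E\sum_{k=0}^{N-1}\big\langle B_k^\top x_k^*+S_k\mathbb E_{k-1}[y^*_{k+1}]+R_ku_k^*+\rho_k,\;v_k\big\rangle .
\]

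\textbf{Step 3 (conclusion).} The bracketed process is $\mathcal F_{k-1}$-measurable and square integrable, so it belongs to $\mathcal U$. The derivative vanishes for all $v\in\mathcal U$ if and only if this process is zero, which is exactly \eqref{eq:4.4}. Both directions of the theorem then follow from Step 1.

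The main obstacle is the measurability bookkeeping in Step 2. One must be careful about the convention that $x_k^*$ is $\mathcal F_{k-1}$-measurable, so that the pairing of the noise term $C_k^\top x_k^*\omega_k$ with $\hat y_{k+1}$ correctly reproduces $\mathbb E_{k-1}[\hat y_{k+1}\omega_k]$. One must also check that the terminal and initial boundary terms cancel as claimed.
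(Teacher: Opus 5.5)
Your proposal is correct and follows essentially the same route as the paper. You reduce to Theorem~\ref{thm:3.5} and use the duality sum of the increments $\langle x_k^*, y_k^{(v)}\rangle - \langle x_{k+1}^*, y_{k+1}^{(v)}\rangle$ to rewrite the derivative as $\mathbb{E}\sum_{k}\langle B_k^\top x_k^*+S_k\mathbb{E}_{k-1}[y_{k+1}^*]+R_ku_k^*+\rho_k,\,v_k\rangle$, then read off the stationarity condition. Along the way you supply details the paper only asserts: the completing-the-square proof that $\mathcal{J}^0(N,0;v)\ge 0$, the $\mathcal{F}_{k-1}$-measurability of $x_k^*$, and taking $v$ equal to the bracketed process to force it to vanish.
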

    \begin{proof}
        Let \( u^* \) be an admissible control. Then for any control \( v \in \mathcal{U} \) and any \( \delta\in \mathbb{R} \), we have
        \[\mathcal{J}(N, \xi; u^* + \delta v) - \mathcal{J}(N, \xi; u^*) \ge 0,\]
        Using Lemma \ref{lem:3.3}, we obtain:
    \[
        \begin{aligned}
        &\mathcal{J}(N, \xi; u^* + \delta v) - \mathcal{J}(N, \xi; u^*) \\
        &= \delta\mathbb{E}\Bigg\{  \langle G_0 y_0^*, y_0^{(v)} \rangle + \\&\quad\sum_{k=0}^{N-1} \left[\left\langle \begin{pmatrix}Q_{k} & S_{k}^{\top} \\ S_{k} & R_{k}\end{pmatrix} \begin{pmatrix} \mathbb{E}_{k-1}[y_{k+1}^*] \\ u_k^* \end{pmatrix}, \begin{pmatrix} \mathbb{E}_{k-1}[y_{k+1}^{(v)}] \\ v_k \end{pmatrix} \right\rangle \right. \\
        &\quad + \left.  \left\langle \begin{pmatrix} \eta_k \\ \rho_k \end{pmatrix}, \begin{pmatrix} \mathbb{E}_{k-1}[y_{k+1}^{(v)}] \\ v_k \end{pmatrix} \right\rangle \right]\Bigg\}+ \frac{1}{2} \delta^2 \, \mathbb{E} \Bigg\{ \langle G_0 y_0^{(v)}, y_0^{(v)} \rangle  \\
        &\quad + \sum_{k=0}^{N-1} \left\langle \begin{pmatrix}Q_{k} & S_{k}^{\top} \\ S_{k} & R_{k}\end{pmatrix} \begin{pmatrix} \mathbb{E}_{k-1}[y_{k+1}^{(v)}] \\ v_k \end{pmatrix}, \begin{pmatrix} \mathbb{E}_{k-1}[y_{k+1}^{(v)}] \\ v_k \end{pmatrix} \right\rangle \Bigg\}
         \\
        &= \delta [[\mathcal{J}'(N, \xi; u^*), v]]   + \delta^2 \mathcal{J}^0(N, 0; v).
        \end{aligned}
    \]
        Here,
        \begin{equation}\label{eq:4.5}
        \begin{aligned}
        &[[\mathcal{J}'(N, \xi; u^*), v]] \\=& \mathbb{E} \bigg[ \langle G_0 y_0^*, y_0^{(v)} \rangle\\& + \sum_{k=0}^{N-1} \left( \langle Q_k \mathbb{E}_{k-1}[y_{k+1}^*] + S_k^\top u_k^* + \eta_k, \mathbb{E}_{k-1}[y_{k+1}^{(v)}] \rangle \right. \\
        &+\left. \langle S_k \mathbb{E}_{k-1}[y_{k+1}^*] + R_k u_k^* + \rho_k, v_k \rangle \right) \bigg],
        \end{aligned}
        \end{equation}
        and
\[
        \begin{aligned}
        \quad &\mathcal{J}^0(N, 0; v) \\=& \frac{1}{2} \mathbb{E} \Bigg\{ \langle G_0 y_0^{(v)}, y_0^{(v)} \rangle
\\
        &+  \sum_{k=0}^{N-1} \left( \langle Q_k \mathbb{E}_{k-1}[y_{k+1}^{(v)}] + S_k^\top v_k, \mathbb{E}_{k-1}[y_{k+1}^{(v)}] \rangle \right.
        \\&+ \left. \langle S_k \mathbb{E}_{k-1}[y_{k+1}^{(v)}] + R_k v_k, v_k \rangle \right) \Bigg\}.
        \end{aligned}
        \]
        Summing $\langle x_{k}^{*}, y_{k}^{(v)} \rangle - \langle x_{k+1}^{*}, y_{k+1}^{(v)} \rangle$ from $k=0$ to $N-1$ , we have:
        \begin{equation}\label{eq:4.6}			
        \begin{aligned}
            &\mathbb{E} \left\{ \langle G_{0} y_{0}^{*}, y_{0}^{(v)} \rangle \right\} \\=& \mathbb{E} \left\{ \langle x_{0}^{*}, y_{0}^{(v)} \rangle \right\} \\
            =& \mathbb{E} \bigg\{ \sum_{k=0}^{N-1} \left[ \langle x_{k}^{*}, y_{k}^{(v)} \rangle - \langle x_{k+1}^{*}, y_{k+1}^{(v)} \rangle \right] \bigg\}
            \\
            =& \mathbb{E} \bigg\{ \sum_{k=0}^{N-1} \left[ \left\langle x_{k}^{*}, A_k \mathbb{E}_{k-1}[y_{k+1}^{(v)}] + B_k v_k \right.\right. \\
        &\left.\left.+ C_k \mathbb{E}_{k-1}[y_{k+1}^{(v)} \omega_{k}] \right\rangle - \langle x_{k+1}^{*}, y_{k+1}^{(v)} \rangle \right] \bigg\} \\
            =& \mathbb{E} \bigg\{ \sum_{k=0}^{N-1} \left[ \left\langle A_k^{\top} x_k^* - x_{k+1}^* + C_k^\top x_k^* \omega_k, \mathbb{E}_{k-1}[y_{k+1}^{(v)}] \right\rangle\right.\\&\left. + \left\langle B_k^{\top} x_k^*,\ v_k \right\rangle\right] \bigg\}
        \end{aligned}
        \end{equation}
         Substituting equation \eqref{eq:4.6} into equation \eqref{eq:4.5}, we have
        \[
        \begin{aligned}
        &[[ \mathcal{J}'(N,\xi;u^*),v ]]
            \\=& \mathbb{E}\Bigg\{ \sum_{k=0}^{N-1}  \Bigl[\langle A_k^\top x_k^* - x_{k+1}^* + Q_k \mathbb{E}_{k-1}[y_{k+1}^*] + S_k^\top u_k^*\\& + \eta_k + C_k^\top x_k^* \omega_k, \mathbb{E}_{k-1}[y_{k+1}^{(v)}] \rangle +  \langle B_k^\top x_k^* \\
            &+ S_k \mathbb{E}_{k-1}[y_{k+1}^*] + R_k u_k^* + \rho_k, v_k \rangle\Bigl] \Bigg\}
        \end{aligned}
        \]
        Applying the adjoint equation \eqref{eq:4.1}, we obtain
        \begin{equation}\label{eq:4.61}
    \begin{aligned}
    &[[\mathcal{J}'(N,\xi;u^*), v]] \\=& \mathbb{E}\Bigg\{ \sum_{k=0}^{N-1}  \left\langle B_k^{\top} x_{k}^{*} + S_{k} \mathbb{E}_{k-1}[y_{k+1}^{*}] + R_{k} u_k^{*}\right.\\&\left. + \rho_{k}, v_k \right\rangle \Bigg\}
    \end{aligned}
    \end{equation}
        \textbf{(Necessity)}
        Let $(y^*,u^*)$ be an optimal pair of Problem (DTBSLQ), and let $x^*$ be the solution to the adjoint equation \eqref{eq:4.1} corresponding to this optimal pair $(y^*,u^*)$. According to Theorem \ref{thm:3.5}, when $u^*$ is the optimal control, $[[ \mathcal{J}'(N,\xi;u^*),v ]]=0$, which implies
        \[
        \begin{aligned}
        &\mathbb{E}\Bigg\{ \sum_{k=0}^{N-1}\left\langle B_k^{\top}x_k^*+S_k\mathbb{E}_{k-1}[y_{k+1}^*]+R_ku_k^*+\rho_k,\ v_k\right\rangle \Bigg\}\\&=0.
        \end{aligned}
        \]
        Due to the arbitrariness of $v$, the inner product factor of the integrand must be zero for each $k\in\mathcal{T}$. Therefore, we obtain
        \[B_k^{\top}x_k^*+S_k\mathbb{E}_{k-1}[y_{k+1}^*]+R_ku_k^*+\rho_k=0.\]
        The necessity is thus proved.

        \textbf{(Sufficiency)}
    Let $(y^*, u^*)$ be an admissible pair of the Problem (DTBSLQ), and let $x^*$ be the solution to the adjoint equation \eqref{eq:4.1} corresponding to $(y^*, u^*)$. Under Assumptions \ref{ass:2.1}, \ref{ass:2.2} and \ref{ass:3.1}, we have
    \[ \mathcal{J}^0(N,0;v) \geq 0, \quad \forall v \in \mathcal{U}. \]
    This confirms condition (i) of Theorem \ref{thm:3.5}. Next, suppose that the pair $(x^*, y^*)$ satisfies the Hamiltonian system \eqref{eq:4.3}, and the stationarity condition \eqref{eq:4.4} holds, i.e.,
    \[B_k^\top x_k^* + S_k \mathbb{E}_{k-1}[y_{k+1}^*] + R_k u_k^* + \rho_k = 0, \quad \forall k \in \mathcal{T}.\]

    Substituting this stationarity condition into the expression of \eqref{eq:4.61}, we immediately obtain that
    \[ [[\mathcal{J}'(N,\xi;u^*), v]] = 0, \quad \forall v \in \mathcal{U}. \]
    This verifies condition (ii) in Theorem \ref{thm:3.5}.

    Therefore, by Theorem \ref{thm:3.5}, the control $u^*$ is the optimal control for the given terminal state $\xi$. The sufficiency is proved.
        \end{proof}
         \begin{rmk}
    The condition
    \[B_k^{\top} x_k^* + S_k \mathbb{E}_{k-1}[y_{k+1}^*] + R_k u_k^* + \rho_k = 0\]
    is called the stationarity condition. Since \( R_k > 0 \) by Assumption \ref{ass:3.1}, \( R_k \) is invertible. Thus, the optimal control admits the feedback form:
    \begin{equation}\label{eq:4.7}
    u_k^* = -R_k^{-1} \left[ B_k^{\top} x_k^* + S_k \mathbb{E}_{k-1}[y_{k+1}^*] + \rho_k \right], \quad k \in {\mathcal{T}}.
    \end{equation}
    \end{rmk}
   Next, we first perform a preliminary simplification on the expression of the value function.

    Applying the state equation \eqref{eq:2.1} and the adjoint equation \eqref{eq:4.1}, compute the sum of the differences $\langle x^{*}_{k}, y^{*}_{k} \rangle - \langle x^{*}_{k+1}, y^{*}_{k+1} \rangle$ from $k=0$ to $k=N-1$.
                \begin{equation}\label{eq:4.8}
			\begin{aligned}
				& \mathbb{E} \left\{ \langle G_{0} y_{0}^{*}, y_{0}^{*} \rangle \right\} \\
				= & \mathbb{E} \left\{ \langle x_{0}^{*}, y_{0}^{*} \rangle \right\} \\
				= & \mathbb{E} \left\{ \sum_{k=0}^{N-1} \left[ \langle x_{k}^{*}, y_{k}^{*} \rangle - \langle x_{k+1}^{*}, y_{k+1}^{*} \rangle \right] + \langle x_{N}^{*}, y_{N}^{*} \rangle \right\} \\
				= & \mathbb{E} \left\{ \sum_{k=0}^{N-1} \left[ \langle x_{k}^{*}, A_k \mathbb{E}_{k-1} [y_{k+1}^{*}] + B_k u_k^*\right.\right.\\&\left.\left. + C_k \mathbb{E}_{k-1} [y_{k+1}^{*} \omega_k] + q_k \rangle - \langle x_{k+1}^{*}, y_{k+1}^{*} \rangle \right] \right.\\&\left.+ \langle x_{N}^{*}, y_{N}^{*} \rangle \right\}
                \\
				= & \mathbb{E} \left\{ \sum_{k=0}^{N-1} \left[ \langle A_k^{\top} x_{k}^{*} -x_{k+1}^{*} + C_k^{\top} x_{k}^{*} \omega_k, \mathbb{E}_{k-1} [y_{k+1}^{*}] \rangle \right.\right.\\&\left.\left.+ \langle B_k^{\top} x_{k}^{*}, u_k^{*} \rangle + \langle x_{k}^{*}, q_k \rangle \right] + \langle x_{N}^{*}, y_{N}^{*} \rangle \right\} \\
				= & \mathbb{E} \left\{ \sum_{k=0}^{N-1} \left[ -\langle Q_{k} \mathbb{E}_{k-1} [y_{k+1}^{*}]+  S_k^{\top} u_k^* +\eta_{k}, \mathbb{E}_{k-1} [y_{k+1}^{*}] \rangle \right.\right.
            \\&\left.\left.+ \langle B_k^{\top} x_{k}^{*}, u_k^{*} \rangle + \langle x_{k}^{*}, q_k \rangle \right] + \langle x_{N}^{*}, y_{N}^{*} \rangle \right\}
			\end{aligned}
		\end{equation}

		Below, using the relation \eqref{eq:4.8} and the stationarity condition \eqref{eq:4.4}, the value function \( V(N,\xi) \) is expressed in a more concise form.
\begin{equation}\label{eq:4.9}
			\begin{aligned}
            	&V(N, \xi) \\=&\mathcal{J}(N,\xi;u^\ast)
				\\
				=&\frac{1}{2}\mathbb{E}\left\langle G_{0}y_{0}^{*},y_{0}^{*}\right\rangle +\frac{1}{2}\mathbb{E}\Bigg\{\sum_{k=0}^{N-1}\Bigl[\left\langle Q_{k}\mathbb{E}_{k-1}[y_{k+1}^{*}],\mathbb{E}_{k-1}[y_{k+1}^{*}]\right\rangle \\&+\left\langle R_{k}u_k^{*},u_k^{*}\right\rangle+2 \left\langle S_{k}\mathbb{E}_{k-1} [y_{k+1}^{*}],u_k^{*}\right\rangle\\&+2\left\langle \eta_{k},\mathbb{E}_{k-1}[y_{k+1}^{*}]\right\rangle +2\left\langle \rho_{k},u_k^{*}\right\rangle\Bigl] \Bigg\} \\
				=& \frac{1}{2} \mathbb{E}\Bigg\{ \sum_{k=0}^{N-1} \Bigl[ -\langle Q_{k} \mathbb{E}_{k-1} [y_{k+1}^{*}]+  S_k^{\top} u_k^* + \eta_k, \mathbb{E}_{k-1} [y_{k+1}^{*}] \rangle \\&+ \langle B_k^{\top} x_{k}^{*}, u_k^{*} \rangle + \langle x_{k}^{*}, q_k \rangle \Bigl]+ \langle x_{N}^{*}, y_{N}^{*} \rangle  \Bigg\} 
                \\
				&+ \frac{1}{2} \mathbb{E}\Bigg\{ \sum_{k=0}^{N-1} \Bigl[ \langle Q_{k} \mathbb{E}_{k-1} [y_{k+1}^{*}], \mathbb{E}_{k-1} [y_{k+1}^{*}] \rangle \\&+ \langle R_{k} u_k^{*}, u_k^{*} \rangle +2 \left\langle S_{k}\mathbb{E}_{k-1} [y_{k+1}^{*}],u_k^{*}\right\rangle\\
				&+ 2 \langle \eta_{k}, \mathbb{E}_{k-1} [y_{k+1}^{*}] \rangle + 2 \langle \rho_{k}, u_k^{*} \rangle \Bigl] \Bigg\}
				\\=& \frac{1}{2} \mathbb{E} \Bigg\{ \sum_{k=0}^{N-1} \Bigl[ \left\langle B_k^{\top}x_{k}^{*}+S_k \mathbb{E}_{k-1}[y_{k+1}^*]+ R_{k} u_k^{*} \right.\\&\left.+ \rho_{k}, u_k^{*} \right\rangle + \left\langle x_{k}^{*}, q_k \right\rangle + \left\langle \eta_{k},\mathbb{E}_{k-1}[y_{k+1}^{*}]\right\rangle
\\
				&+\left\langle \rho_{k}, u_k^{*}\right\rangle\Bigl]+ \left\langle x_{N}^{*}, y_{N}^{*} \right\rangle \Bigg\}
\\
				=&\frac{1}{2}\mathbb{E}\Bigg\{\sum_{k=0}^{N-1}\Bigl[\left\langle x_{k}^{*}, q_k\right\rangle+\left\langle \eta_{k}, \mathbb{E}_{k-1}[y_{k+1}^{*}]\right\rangle\\&+\left\langle \rho_{k}, u_k^{*}\right\rangle\Bigl]+\left\langle x_{N}^{*}, y_{N}^{*}\right\rangle\Bigg\}.
			\end{aligned}
		\end{equation}

            \subsection{Equivalent Problem of Problem (DTBSLQ)}
	Since the solution to the Riccati equation corresponding to the cost function with cross terms lacks symmetry in the backward stochastic linear quadratic optimal control problem, we derive an equivalent form of the original problem (DTBSLQ) in this subsection.

    First, let $H = \{ H_k \}_{k=0}^N \in \mathbb{S}$ be the unique solution to the following equations:
    \begin{equation}\label{eq:4.10}
    \begin{cases}
    \left( A_k^\top H_k B_k \right) R_k^{-1} \left( B_k^\top H_k A_k \right) + A_k^\top H_k A_k \\- H_{k+1} = 0, \\
    H_0 = G_0, ~~~~k \in \mathcal{T}.
    \end{cases}
    \end{equation}
    Through the following transformations:
    \[
    \begin{aligned}
    \bar{u}_k &= u_k + R_k^{-1} \left( B_k^\top H_k A_k + S_k \right) \mathbb{E}_{k-1}[y_{k+1} \omega_k], \\
    \bar{C}_{k} &= C_k - B_kR_k^{-1} \left( B_k^\top H_k A_k + S_k \right), \\
    \hat{Q}_k &= Q_k - S_k^\top R_k^{-1} S_k \\&\quad + (A_k^\top H_k B_k) R_k^{-1} (B_k^\top H_k A_k), \\
    \hat{S}_k &= -B_k^\top H_k A_k, \\
    \hat{\eta}_k &= \eta_k - \left( A_k^\top H_k B_k + S_k^\top \right) R_k^{-1} \rho_k,
    \end{aligned}
    \]
    the original problem (DTBSLQ) is transformed into the following BSLQ optimal control problem with state equation:
    \begin{equation}\label{eq:4.12}
    \begin{cases}
    y_k = A_k \mathbb{E}_{k-1}[y_{k+1}] + B_k \bar{u}_k + \bar{C}_{k} \mathbb{E}_{k-1}[y_{k+1} \omega_k]\\\quad\quad + q_k, \\
    y_N = \xi,~~~~k \in \mathcal{T},
    \end{cases}
    \end{equation}
    and cost functional
    \begin{equation}\label{eq:4.13}
    \begin{aligned}
    &\mathcal{J}(N, \xi; \bar{u})\\ =& \frac{1}{2} \mathbb{E} \Bigg\{ \langle G_0 y_0, y_0 \rangle \\&+ \sum_{k=0}^{N-1} \Bigg\langle \begin{pmatrix} \hat{Q}_k & \hat{S}_k^\top \\ \hat{S}_k & R_k \end{pmatrix} \begin{pmatrix} \mathbb{E}_{k-1}[y_{k+1}] \\ \bar{u}_k \end{pmatrix}, \begin{pmatrix} \mathbb{E}_{k-1}[y_{k+1}] \\ \bar{u}_k \end{pmatrix} \Bigg\rangle \\
    &+ 2 \Bigg\langle \begin{pmatrix} \hat{\eta}_k \\ \rho_k \end{pmatrix}, \begin{pmatrix} \mathbb{E}_{k-1}[y_{k+1}] \\ \bar{u}_k \end{pmatrix} \Bigg\rangle \Bigg\}.
    \end{aligned}
    \end{equation}
    Let \(y = \{ y_k \}_{k=0}^N\) be the solution to the state equation \eqref{eq:4.12}. By summing \(\langle H_k y_k, y_k \rangle - \langle H_{k+1} y_{k+1}, y_{k+1} \rangle\) over \(k = 0\) to \(k = N-1\), we obtain:
    \[
    \begin{aligned}
     &\mathbb{E} \left\{ \langle G_0 y_0, y_0 \rangle \right\}\\
    =&\mathbb{E} \left\{ \langle H_0 y_0, y_0 \rangle \right\}\\
    =&\mathbb{E} \biggl\{ \sum_{k=0}^{N-1} \Bigl[ \langle H_k y_k, y_k \rangle - \langle H_{k+1} y_{k+1}, y_{k+1} \rangle \Bigl] \\&+ \langle H_N y_N, y_N \rangle \biggl
    \} \\
    =& \mathbb{E} \biggl\{ \sum_{k=0}^{N-1} \Bigl[ \langle H_k ( A_k  \mathbb{E}_{k-1}[y_{k+1}]+ B_k \bar{u}_k\\&\left.+ \hat{C}_k \mathbb{E}_{k-1}[y_{k+1} \omega_k] + q_k ),( A_k  \mathbb{E}_{k-1}[y_{k+1}]\right.\\&\left.+ B_k \bar{u}_k + \hat{C}_k \mathbb{E}_{k-1}[y_{k+1} \omega_k] + q_k) \rangle \right.\\&- \langle H_{k+1} y_{k+1}, y_{k+1} \rangle \Bigl]+ \langle H_N y_N, y_N \rangle \biggl\}
    \\
    =& \mathbb{E} \left\{ \sum_{k=0}^{N-1} \left[\left\langle \begin{pmatrix}
    A_k^\top H_k A_k - H_{k+1} & A_k^\top H_k B_k \\
    B_k^\top H_k A_k & B_k^\top H_k B_k
    \end{pmatrix}\right.\right.\right.\\& \left.\left.\left. \begin{pmatrix}
    \mathbb{E}_{k-1}[y_{k+1}] \\ \bar{u}_k
    \end{pmatrix}, \right. \right.\begin{pmatrix}
    \mathbb{E}_{k-1}[y_{k+1}] \\ \bar{u}_k
    \end{pmatrix} \right\rangle 
    \end{aligned}\]
    
    \begin{equation}\label{eq:4.14}
    \begin{aligned}
    &\left.\left.+ 2\left\langle \begin{pmatrix}
    A_k^\top H_k q_k \\ B_k^\top H_k q_k
    \end{pmatrix}, \begin{pmatrix}
    \mathbb{E}_{k-1}[y_{k+1}] \\ \bar{u}_k
    \end{pmatrix} \right\rangle \right.\right.\\&\left.+ \langle H_k q_k, q_k \rangle \Bigl] + \langle H_N y_N, y_N \rangle \right\}.
    \end{aligned}
    \end{equation}
    We define the following notations:
    \[
    \begin{aligned}
    \bar{Q}_k &= Q_k - S_k^\top R_k^{-1} S_k, \\
    \bar{R}_k &= R_k + B_k^\top H_k B_k, \\
    \bar{\eta}_k &= \eta_k - \left( A_k^\top H_k B_k + S_k^\top \right) R_k^{-1} \rho_k + A_k^\top H_k q_k, \\
    \bar{\rho}_k &= \rho_k +B_k^\top H_k q_k.
    \end{aligned}
    \]
    It follows easily from Assumption \ref{ass:3.1} that $\bar{Q}_k \ge 0$ and $\bar{R}_k \gg 0$.

    Now, by substituting \eqref{eq:4.14} into \eqref{eq:4.13} and using \eqref{eq:4.10} , we obtain:
            \begin{equation}\label{eq:4.16}
        \begin{aligned}
            &\quad \mathcal{J}(N, \xi; \bar{u})\\
            =& \frac{1}{2} \mathbb{E} \Bigg\{ \sum_{k=0}^{N-1} \bigg[
            \left\langle \begin{pmatrix}
            A_k^\top H_k A_k - H_{k+1} & A_k^\top H_k B_k \\
            B_k^\top H_k A_k & B_k^\top H_k B_k
            \end{pmatrix}\right.\\&\left. \begin{pmatrix}
            \mathbb{E}_{k-1} [y_{k+1}] \\ \bar{u}_k
            \end{pmatrix}, \right.\begin{pmatrix}
            \mathbb{E}_{k-1} [y_{k+1}] \\ \bar{u}_k
            \end{pmatrix} \rangle \\
            &+ 2\left\langle \begin{pmatrix}
            A_k^\top H_k q_k \\ B_k^\top H_k q_k
            \end{pmatrix}, \begin{pmatrix}
            \mathbb{E}_{k-1} [y_{k+1}] \\ \bar{u}_k
            \end{pmatrix}   \right\rangle\\&+ \langle H_k q_k, q_k \rangle
        \\&+\Bigg\langle \begin{pmatrix} \hat{Q}_k & \hat{S}_k^\top \\ \hat{S}_k & R_k \end{pmatrix} \begin{pmatrix} \mathbb{E}_{k-1}[y_{k+1}] \\ \bar{u}_k \end{pmatrix}, \begin{pmatrix} \mathbb{E}_{k-1}[y_{k+1}] \\ \bar{u}_k \end{pmatrix} \Bigg\rangle \\
            &+ 2 \Bigg\langle
            \begin{pmatrix}
            \hat{\eta}_k \\
            \rho_k
            \end{pmatrix},
            \begin{pmatrix}
            \mathbb{E}_{k-1}[y_{k+1}] \\
            \bar{u}_k
            \end{pmatrix} \Bigg\rangle\bigg]
            \\&+ \langle H_N y_N, y_N \rangle \Bigg\}\\
            =& \frac{1}{2} \mathbb{E} \Bigg\{ \sum_{k=0}^{N-1}\bigg[ \Bigg\langle
            \begin{pmatrix}
            \bar{Q}_k & 0 \\
            0 & \bar{R}_k
            \end{pmatrix}
            \begin{pmatrix}
            \mathbb{E}_{k-1}[y_{k+1}] \\
            \bar{u}_k
            \end{pmatrix},
            \begin{pmatrix}
            \mathbb{E}_{k-1}[y_{k+1}] \\
            \bar{u}_k
            \end{pmatrix} \Bigg\rangle\\&+ 2 \Bigg\langle
            \begin{pmatrix}
            \bar{\eta}_k \\
            \bar{\rho}_k
            \end{pmatrix},
            \begin{pmatrix}
            \mathbb{E}_{k-1}[y_{k+1}] \\
            \bar{u}_k
            \end{pmatrix} \Bigg\rangle + \langle H_k q_k, q_k \rangle\bigg]\\
            &+ \langle H_N y_N, y_N \rangle\Bigg\}
       \end{aligned}
    \end{equation}

    Therefore, for a given terminal state $\xi$, minimizing $\mathcal{J}(N, \xi; u)$ \eqref{eq:2.2} subject to \eqref{eq:2.1} is equivalent to minimizing $\mathcal{J}(N, \xi; \bar{u})$ \eqref{eq:4.16} subject to \eqref{eq:4.12}.From Theorem \ref{thm:4.1},
    the equivalent Hamiltonian system we now consider is given by
    \begin{equation}\label{eq:4.17}
        \begin{cases}
            x_{k+1}^* = A_k^{\top} x_k^* + \bar{Q}_k \mathbb{E}_{k-1}[y_{k+1}^*] + \bar{\eta}_k + \bar{C}_{k}^{\top} x_k^* \omega_k, \\
            y_k^* = A_k \mathbb{E}_{k-1}[y_{k+1}^*] + B_k \bar{u}_k^* \\\quad\quad+ \bar{C}_{k} \mathbb{E}_{k-1}[y_{k+1}^* \omega_k] + q_k, \\
            x_0^* = G_0 y_0^*, \quad y_N^* = \xi, \\
            B_k^\top x_k^* + \bar{R}_k \bar{u}_k^* + \bar{\rho}_k = 0.
        \end{cases}
    \end{equation}

    \subsection{Riccati Equation and Feedback Representation of Optimal Control}
	This subsection focuses on the core contribution of this research: decoupling the fully coupled Hamiltonian system \eqref{eq:4.17} by introducing Riccati difference equation, thereby deriving the feedback form of the optimal control, and then deducing the analytical expression of the value function.Next, we consider the state \eqref{eq:4.12} and the cost function \eqref{eq:4.16}. Furthermore, according to Equation \eqref{eq:4.9}, the concise form of the value function to be discussed next is given by
     \begin{equation}\label{eq:4.18}
		\begin{aligned}
            &V(N, \xi) \\=&\frac{1}{2}\mathbb{E}\Bigg\{\sum_{k=0}^{N-1}\Bigl[\left\langle x_{k}^{*}, q_k\right\rangle+\left\langle \bar{\eta}_{k}, \mathbb{E}_{k-1}[y_{k+1}^{*}]\right\rangle+\left\langle \bar{\rho}_{k}, \bar{u}_k^{*}\right\rangle \\&+ \langle H_k q_k, q_k \rangle \Bigl]+\left\langle x_{N}^{*}, y_{N}^{*}\right\rangle+ \langle H_N y_N, y_N \rangle\Bigg\}
        \end{aligned}
    \end{equation}
    To simplify the notation, for $\Sigma := \{\Sigma_k \mid k \in \mathcal{T}\}\in\mathbb{S}_+^n$, we define the following mappings:
    \begin{equation}\label{eq:4.19}
        \begin{aligned}
            \Theta(\Sigma_{k+1}) &= I + \bar{Q}_k \Sigma_{k+1}, \\
            \mathcal{R}(\Sigma_{k+1}) &= -A_k \Sigma_{k+1} \Theta^{-1}(\Sigma_{k+1}) A_k^{\top}.
        \end{aligned}
    \end{equation}
 \begin{rmk}
        Since $\bar{Q}_k \geq 0$, there exists a matrix $D_k$ such that $\bar{Q}_k = D_k D_k^\top$. Then, by the matrix inversion lemma\cite{TylSoh:05}, we obtain:
        \begin{equation}\label{eq:4.21}
        \begin{aligned}
            &\left(I + D_k D_k^\top \Sigma_{k+1}\right)^{-1} \\=& I - D_k \left(I + D_k^\top \Sigma_{k+1} D_k\right)^{-1} D_k^\top \Sigma_{k+1}.
            \end{aligned}
        \end{equation}
        Since $I + D_k^\top \Sigma_{k+1} D_k > 0$ is invertible, the matrix inversion lemma implies that $\Theta(\Sigma_{k+1})$ is also invertible.
 \end{rmk}

    Before constructing the optimal control for the backward stochastic differential equation, we first introduce the following Riccati equation:
    \begin{equation}\label{eq:4.20}
        \begin{cases}
            \Sigma_k = -\mathcal{R}(\Sigma_{k+1}) + B_k \bar{R}_k^{-1}B_k^{\top}, \quad k \in \mathcal{T}. \\
            \Sigma_N = 0,
        \end{cases}
    \end{equation}
    \begin{thm}\label{thm:4.3}
        Let Assumptions \ref{ass:2.1},\ref{ass:2.2} and \ref{ass:3.1} hold. Then the Riccati equation \eqref{eq:4.20} admits a unique semi-positive definite solution $\Sigma := \{\Sigma_k \mid k \in \mathcal{T} \}$, and furthermore, $\Theta(\Sigma_{k+1})$ is invertible.
    \end{thm}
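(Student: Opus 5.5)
The plan is a backward induction on $k=N,N-1,\dots,0$. The inductive hypothesis is that $\Sigma_{k+1}$ is well defined and belongs to $\mathbb{S}_+^n$. In the inductive step we show that $\Theta(\Sigma_{k+1})$ is invertible, that $-\mathcal{R}(\Sigma_{k+1})$ is symmetric positive semidefinite, and hence that $\Sigma_k$ given by \eqref{eq:4.20} again lies in $\mathbb{S}_+^n$. Uniqueness is then automatic: \eqref{eq:4.20} is an explicit backward recursion from $\Sigma_N=0$, so once every step is well defined each $\Sigma_k$ is uniquely determined. The base case is trivial, since $\Sigma_N=0\in\mathbb{S}_+^n$ and $\Theta(\Sigma_N)=I$.

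\textbf{Invertibility of $\Theta(\Sigma_{k+1})$.} Assume $\Sigma_{k+1}\ge 0$. Because $\bar Q_k\ge0$ by Assumption \ref{ass:3.1}, we can factor $\bar Q_k=D_kD_k^\top$ as in the preceding remark. The key fact is $\det(I+D_kD_k^\top\Sigma_{k+1})=\det(I+D_k^\top\Sigma_{k+1}D_k)$ by Sylvester's determinant identity. The right-hand side is positive because $I+D_k^\top\Sigma_{k+1}D_k\ge I$. So $\Theta(\Sigma_{k+1})$ is invertible, with inverse given explicitly by \eqref{eq:4.21}.

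\textbf{Symmetry and semidefiniteness of $-\mathcal{R}(\Sigma_{k+1})$.} I expect this to be the main obstacle, since $\Sigma_{k+1}\Theta^{-1}(\Sigma_{k+1})=\Sigma_{k+1}(I+\bar Q_k\Sigma_{k+1})^{-1}$ is not obviously symmetric. Multiplying \eqref{eq:4.21} on the left by $\Sigma_{k+1}$ gives
\[
\Sigma_{k+1}\Theta^{-1}(\Sigma_{k+1})=\Sigma_{k+1}-\Sigma_{k+1}D_k\left(I+D_k^\top\Sigma_{k+1}D_k\right)^{-1}D_k^\top\Sigma_{k+1},
\]
and the right-hand side is manifestly symmetric. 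For semidefiniteness I would use a square root. Write $\Sigma_{k+1}=P^2$ with $P=\Sigma_{k+1}^{1/2}\ge0$. Then
\[
\Sigma_{k+1}\Theta^{-1}(\Sigma_{k+1})=P\left[I-PD_k(I+D_k^\top P^2D_k)^{-1}D_k^\top P\right]P .
\]
Apply the matrix inversion lemma once more with $M=PD_k$. The bracket equals $(I+PD_kD_k^\top P)^{-1}=(I+P\bar Q_kP)^{-1}$, which is positive definite. Hence
\[
\Sigma_{k+1}\Theta^{-1}(\Sigma_{k+1})=\Sigma_{k+1}^{1/2}\left(I+\Sigma_{k+1}^{1/2}\bar Q_k\Sigma_{k+1}^{1/2}\right)^{-1}\Sigma_{k+1}^{1/2}\ge0,
\]
and therefore $-\mathcal{R}(\Sigma_{k+1})=A_k\Sigma_{k+1}\Theta^{-1}(\Sigma_{k+1})A_k^\top\ge0$. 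Equivalently, one can check directly that $\Sigma_{k+1}(I+\bar Q_k\Sigma_{k+1})^{-1}=\Sigma_{k+1}^{1/2}(I+\Sigma_{k+1}^{1/2}\bar Q_k\Sigma_{k+1}^{1/2})^{-1}\Sigma_{k+1}^{1/2}$ by multiplying both sides on the right by $I+\bar Q_k\Sigma_{k+1}$ and using $\Sigma_{k+1}=\Sigma_{k+1}^{1/2}\Sigma_{k+1}^{1/2}$.

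\textbf{Closing the induction.} By Assumption \ref{ass:3.1} we have $\bar R_k\gg0$, as noted after its definition. This requires $H_k\ge0$, which holds since \eqref{eq:4.10} propagates $H$ forward from $H_0=G_0\ge0$ as a sum of positive semidefinite terms. Hence $B_k\bar R_k^{-1}B_k^\top\ge0$, and
\[
\Sigma_k=-\mathcal{R}(\Sigma_{k+1})+B_k\bar R_k^{-1}B_k^\top\in\mathbb{S}_+^n .
\]
This completes the induction and yields existence, uniqueness, positive semidefiniteness, and invertibility of every $\Theta(\Sigma_{k+1})$.
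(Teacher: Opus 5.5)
Your proof is correct. It shares the paper's skeleton: backward induction from $\Sigma_N=0$, the factorization $\bar Q_k=D_kD_k^\top$, and the inversion formula \eqref{eq:4.21}. The key positivity step, however, is argued differently.

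\textbf{The positivity step.} The paper completes the square. With $M_k=(I+D_k^\top\Sigma_{k+1}D_k)^{-1}$ and $X_k=A_k^\top-D_kM_kD_k^\top\Sigma_{k+1}A_k^\top$, it writes
$A_k\Sigma_{k+1}\Theta^{-1}(\Sigma_{k+1})A_k^\top=X_k^\top\Sigma_{k+1}X_k+A_k\Sigma_{k+1}D_kM_k^2D_k^\top\Sigma_{k+1}A_k^\top$,
a sum of manifestly symmetric semidefinite terms. You instead use the congruence
$\Sigma_{k+1}\Theta^{-1}(\Sigma_{k+1})=\Sigma_{k+1}^{1/2}(I+\Sigma_{k+1}^{1/2}\bar Q_k\Sigma_{k+1}^{1/2})^{-1}\Sigma_{k+1}^{1/2}$.
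The paper's identity avoids matrix square roots and works only with $D_k$ and $\Sigma_{k+1}$. Yours is shorter and gives symmetry and semidefiniteness at once. It also makes the two-sided bound $0\le\Sigma_{k+1}\Theta^{-1}(\Sigma_{k+1})\le\Sigma_{k+1}$ immediate.

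\textbf{The remaining pieces.} Your remark that uniqueness is automatic for an explicit backward recursion does the same work as the paper's difference argument for $\Delta\Sigma_k$. Sylvester's determinant identity replaces the paper's appeal to \eqref{eq:4.21} for the invertibility of $\Theta(\Sigma_{k+1})$.

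You also fill in a point the paper only asserts: $\bar R_k\gg0$ depends on $H_k\ge0$. You correctly obtain this by propagating \eqref{eq:4.10} forward from $H_0=G_0\ge0$.
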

    \begin{proof} \textbf{(Uniqueness)}
        Assume that $\Sigma^{(1)} = \{\Sigma_k^{(1)} \mid k \in \mathcal{T}\}$ and $\Sigma^{(2)} = \{\Sigma_k^{(2)} \mid k \in \mathcal{T}\}$ are two solutions of the Riccati equation. Let $\Delta \Sigma_k = \Sigma_k^{(1)} - \Sigma_k^{(2)}$, with $\Delta \Sigma_N = 0$.

	    First, from
        \[
        \Delta \Theta(\Sigma_{k+1})= \Theta(\Sigma_{k+1}^{(1)}) - \Theta(\Sigma_{k+1}^{(2)}) = \bar{Q}_k \Delta \Sigma_{k+1},
        \]
        we have
        \[
            \begin{aligned}
                &\Delta \Theta^{-1}(\Sigma_{k+1}) \\= &\Theta^{-1}(\Sigma_{k+1}^{(1)}) - \Theta^{-1}(\Sigma_{k+1}^{(2)}) \\
                =& -\Theta^{-1}(\Sigma_{k+1}^{(1)}) \bar{Q}_k \Delta \Sigma_{k+1} \Theta^{-1}(\Sigma_{k+1}^{(2)}).
            \end{aligned}
        \]

        We further obtain
        \[
            \begin{aligned}
                &\Delta \Sigma_k \\=&-\mathcal{R}(\Sigma_{k+1}^{(1)}) + \mathcal{R}(\Sigma_{k+1}^{(2)})\\
                    =& A_k \left[ \Delta\Sigma_{k+1} \Theta^{-1}(\Sigma_{k+1}^{(1)})\right.\\&\left. - \Sigma_{k+1}^{(2)} \Theta^{-1}(\Sigma_{k+1}^{(1)}) \bar{Q}_k \Delta \Sigma_{k+1} \Theta^{-1}(\Sigma_{k+1}^{(2)}) \right] A_k^{\top}
            \end{aligned}
        \]
        Then, we proceed by backward induction starting from $\Delta \Sigma_{N} = 0$. By backward recursion, we obtain:
        \[
            \Delta \Sigma_{k} = 0 \quad \text{for all } k \in \mathcal{T}
        \]
        This completes the proof of the uniqueness of the solution.

        \textbf{(Existence)}
        We use mathematical induction to prove that the solution to the Riccati equation is a semi-positive definite matrix.

        From \eqref{eq:4.20}, we have:
        \[
            \begin{aligned}
                \Sigma_{N-1} &= -A_{N-1} \Sigma_N \Theta^{-1}(\Sigma_N) A_{N-1}^\top+B_{N-1} \bar{R}_{N-1}^{-1} B_{N-1}^\top \\
                &= B_{N-1} \bar{R}_{N-1}^{-1} B_{N-1}^\top \geq 0.
            \end{aligned}
        \]
        Suppose that at time $k+1$, the solution satisfies $\Sigma_{k+1} \geq 0$.

        We now prove that $\Sigma_k \geq 0$ at time $k$.
        From equations \eqref{eq:4.20} and \eqref{eq:4.21}, we obtain:
        \[
            \begin{aligned}
                \Sigma_k &= A_k \Sigma_{k+1} \left(I + \bar{Q}_k \Sigma_{k+1}\right)^{-1} A_k^\top \\&+ B_k \bar{R}_k^{-1} B_k^\top \\
                &= \left[A_k^\top - D_k \left(I + D_k^\top \Sigma_{k+1} D_k\right)^{-1} D_k^\top \Sigma_{k+1}\right.\\&\left.\quad A_k^\top\right]^\top \Sigma_{k+1} \left[A_k^\top - D_k \left(I + D_k^\top \Sigma_{k+1} D_k\right)^{-1} \right.\\&\quad\left.D_k^\top \Sigma_{k+1} A_k^\top\right]+ A_k \Sigma_{k+1} D_k \left(I + D_k^\top \Sigma_{k+1} D_k\right)^{-1}  \\
                &\quad D_k^\top \Sigma_{k+1} A_k^\top- A_k \Sigma_{k+1} D_k \left(I + D_k^\top \Sigma_{k+1} D_k\right)^{-1}  \\
                &\quad D_k^\top \Sigma_{k+1} D_k \left(I + D_k^\top \Sigma_{k+1} D_k\right)^{-1} D_k^\top \Sigma_{k+1} A_k^\top \\&\quad+ B_k \bar{R}_k^{-1} B_k^\top \\
                &= \left[A_k^\top - D_k \left(I + D_k^\top \Sigma_{k+1} D_k\right)^{-1} D_k^\top \Sigma_{k+1} A_k^\top\right]^\top \\&\quad\Sigma_{k+1} \left[A_k^\top - D_k \left(I + D_k^\top \Sigma_{k+1} D_k\right)^{-1} D_k^\top \Sigma_{k+1}\right.\\&\quad\left. A_k^\top\right] + A_k \Sigma_{k+1} D_k \left(I + D_k^\top \Sigma_{k+1} D_k\right)^{-1}\\&\quad \left(I + D_k^\top \Sigma_{k+1} D_k\right)^{-1} D_k^\top \Sigma_{k+1} A_k^\top + B_k \bar{R}_k^{-1} B_k^\top.
            \end{aligned}
        \]

        Given that $\Sigma_{k+1} \geq 0$ and $\bar{R}_k \gg 0$, we thus have $\Sigma_k \geq 0$.By backward recursion starting from $\Sigma_N = 0$, there exists a solution $\Sigma := \{\Sigma_k \mid k \in \mathcal{T}\}$ satisfying equation \eqref{eq:4.20}.Thus, the existence is proved.
    \end{proof}

    We assume that the optimal state \(y^*\) and the adjoint process \(x^* = \{x_k^*\}_{k=0}^N\) exhibit a linear structural relationship:
    \begin{equation}\label{eq:4.22}
	    y_k^* = -\Sigma_k x_k^* + \varphi_k,
    \end{equation}

	where \(\Sigma := \{\Sigma_k \mid k \in \mathcal{T}\}\) takes values in the space of symmetric positive semidefinite matrices \(\mathbb{S}_+^n\), with the terminal condition \(\Sigma_N = 0\); and \(\varphi = \{\varphi_k \mid k \in \mathcal{T}\}\) is another stochastic process satisfying the following backward stochastic difference equation:
    \begin{equation}\label{eq:4.23}
	    \begin{cases}
	        \varphi_k = f_{k+1}^{(1)} + f_{k+1}^{(2)} \mathbb{E}_{k-1} [\varphi_{k+1}]+f_{k+1}^{(3)}\mathbb{E}_{k-1} [ \varphi_{k+1}\omega_k], \\
	        \varphi_N = \xi,
	    \end{cases}
    \end{equation}

	where \(f_{k+1}^{(1)}\), \(f_{k+1}^{(2)}\),\(f_{k+1}^{(3)}\)are coefficients to be determined.

    For the subsequent calculations, we first substitute relation \eqref{eq:4.22} into the adjoint process given by \eqref{eq:4.17} to express \( x_{k+1} \) in terms of \( x_k \). Then, by the definition of the notation in \eqref{eq:4.19} and the established invertibility of $\Theta(\Sigma_{k+1})$ in Theorem \ref{thm:4.3}, we obtain:
    \begin{equation}\label{eq:4.24}
    \begin{aligned}
        x_{k+1}^* = &\Theta^{-1} \left( \Sigma_{k+1} \right) \left( A_k^{\top} x_k^* + \bar{Q}_k  \mathbb{E}_{k-1}[\varphi_{k+1}]  + \bar{\eta}_{k} \right.\\&\left.+ \bar{C}_{k}^{\top} x_k^* \omega_k \right)
        \end{aligned}
    \end{equation}

    Using the state equation and the adjoint equation in\eqref{eq:4.17}, the notation \eqref{eq:4.19}, and the relation \eqref{eq:4.24}, we obtain:
    \begin{equation}\label{eq:4.25}
        \begin{aligned}
            &-\Sigma_k x_k^* + \varphi_k \\=& y_k^* \\
            =& A_k \mathbb{E}_{k-1} [y_{k+1}^*] + B_k \bar{u}_k^* + \bar{C}_{k} \mathbb{E}_{k-1} [y_{k+1}^* \omega_k] \\&+ q_k \\
            =& A_k \mathbb{E}_{k-1} \left[ -\Sigma_{k+1} \Theta^{-1} \left( \Sigma_{k+1} \right) \left( A_k^{\top}x_k^* \right.\right.\\&\left.\left.+ \bar{Q}_k\mathbb{E}_{k-1}[ \varphi_{k+1}] + \bar{\eta}_{k} + \bar{C}_{k}^{\top} x_k^* \omega_k \right) + \varphi_{k+1} \right] \\&+ B_k \bar{u}_k^* +\bar{C}_{k}\mathbb{E}_{k-1} [ \varphi_{k+1}\omega_k]+ q_k \\
            =& \mathcal{R}(\Sigma_{k+1}) x_k^* + B_k \bar{u}_k^* + \Pi(\Sigma_{k+1}, \varphi_{k+1})
        \end{aligned}
    \end{equation}

     where
    \begin{equation}\label{eq:4.26}
        \begin{aligned}
            &\Pi(\Sigma_{k+1}, \varphi_{k+1}) \\=& \left[A_k-A_k  \Sigma_{k+1} \Theta^{-1} \left( \Sigma_{k+1} \right) \bar{Q}_k\right]  \mathbb{E}_{k-1}[\varphi_{k+1}]\\&+\bar{C}_{k}\mathbb{E}_{k-1} [ \varphi_{k+1}\omega_k]\\&- A_k \Sigma_{k+1} \Theta^{-1} \left( \Sigma_{k+1} \right) \mathbb{E}_{k-1}[\bar{\eta}_{k}]+ q_k
        \end{aligned}
    \end{equation}

    Next, Substitute the stationarity condition in \eqref{eq:4.17} into \eqref{eq:4.22} to obtain
        \begin{equation}\label{eq:4.27}
            \begin{aligned}
                &-\Sigma_{k}x_{k}^*+\varphi_{k}\\ =& \mathcal{R}(\Sigma_{k+1})x_{k}^* + B_k\bar{u}_k^* + \Pi(\Sigma_{k+1},\varphi_{k+1})\\
                =& \left[ \mathcal{R}(\Sigma_{k+1}) - B_k\bar{R}_k^{-1} B_k^{\top}\right]x_{k}^*- B_k\bar{R}_k^{-1}\bar{\rho}_k\\&+ \Pi(\Sigma_{k+1},\varphi_{k+1})
            \end{aligned}
        \end{equation}

    A comparison of both sides of \eqref{eq:4.27} reveals that $\Sigma$ is given by the solution to the following Riccati equation:
    \[
		\left\{\begin{array}{lll}
			\begin{aligned}
				\Sigma_k
				=&A_k \Sigma_{k+1} (I + \bar{Q}_k \Sigma_{k+1})^{-1} A_k^{\top}\\&+ B_k \bar{R}_k^{-1}B_k^{\top} , \quad   k \in \mathcal{T}
				\\
				\Sigma_N =& 0.
			\end{aligned}
		\end{array}
		\right.
        \]

    For a given \( k \in \mathcal{T} \), the process \( \varphi \) is governed by the following backward stochastic differential equation (BSDE):
    \begin{equation}\label{eq:4.29}
		\left\{\begin{array}{lll}
			\begin{aligned}
				\varphi_{k}
				=&\left[A_k-A_k  \Sigma_{k+1} \Theta^{-1} \left( \Sigma_{k+1} \right) \bar{Q}_k\right]  \mathbb{E}_{k-1}[\varphi_{k+1}]\\&+\bar{C}_{k}\mathbb{E}_{k-1} [ \varphi_{k+1}\omega_k] - A_k \Sigma_{k+1} \Theta^{-1} \left( \Sigma_{k+1} \right) \\&\mathbb{E}_{k-1}[\bar{\eta}_{k}]-B_k\bar{R}_{k}^{-1}\bar{\rho}_{k} + q_k, \quad   k \in \mathcal{T}
				\\
				\varphi_{N} =& \xi.
			\end{aligned}
		\end{array}
		\right.
	\end{equation}

	It follows that the terms $ f_{k+1}^{(1)} $, $ f_{k+1}^{(2)} $, $ f_{k+1}^{(3)} $in \eqref{eq:4.23} must satisfy
	\[
	\left\{
	\begin{aligned}
	f_{k+1}^{(1)} = &- A_k \Sigma_{k+1} \Theta^{-1} \left( \Sigma_{k+1} \right) \mathbb{E}_{k-1}[\bar{\eta}_{k}]-B_k\bar{R}_{k}^{-1}\bar{\rho}_{k}\\& + q_k \\
	f_{k+1}^{(2)} = &A_k-A_k  \Sigma_{k+1} \Theta^{-1} \left( \Sigma_{k+1} \right) \bar{Q}_k
     \\
	f_{k+1}^{(3)} = &\bar{C}_{k}
	\end{aligned}
	\right.
	\]

	\begin{thm}\label{thm:4.4}
		  Let Assumptions \ref{ass:2.1},\ref{ass:2.2} and \ref{ass:3.1} hold. Let $\{\Sigma_k \mid k \in \mathcal{T}\} \subset \mathbb{S}_{+}^{n}$ be the solution of the Riccati equation \eqref{eq:4.20}, and let $\{\varphi_k \mid k \in \mathcal{T}\} \subset L_{\mathcal{F}}^2(\mathcal{T}; \mathbb{R}^n)$ be the solution of the backward stochastic differential equation \eqref{eq:4.29}. Then the value function of the problem is given by
		\begin{equation}\label{eq:4.30}
			\begin{aligned}
            	&V(N, \xi)
   				\\=& \frac{1}{2} \mathbb{E} \left\{\sum_{k=0}^{N-1}\Bigl[\langle -\bar{Q}_{k} \Sigma_{k+1} \Theta^{-1}(\Sigma_{k+1})\bar{Q}_{k} \varphi_{k+1}  + \bar{Q}_{k} \varphi_{k+1} \right.\\&+ \bar{\eta}_{k}, \varphi_{k+1} \left\rangle  \right.- \langle \bar{\eta}_{k}, \Sigma_{k+1} \Theta^{-1}(\Sigma_{k+1})  \bar{\eta}_{k}\rangle- \langle \bar{\rho}_{k},\bar{R}_{k}^{-1} \bar{\rho}_{k} \rangle\Bigr]\\
   				&+ \langle H_N \xi, \xi \rangle+ \langle (I + G_0 \Sigma_0)^{-1} G_0 \varphi_0, \varphi_0 \rangle\Bigg\}.
			\end{aligned}
		\end{equation}
	\end{thm}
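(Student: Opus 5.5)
The proof starts from the concise representation \eqref{eq:4.18} of the value function and eliminates the adjoint process $x^*$ term by term. Four substitutions do the work: the decoupling relation \eqref{eq:4.22}, $y_k^*=-\Sigma_kx_k^*+\varphi_k$; the one-step relation \eqref{eq:4.24} expressing $x_{k+1}^*$ through $x_k^*$; the stationarity condition $\bar u_k^*=-\bar R_k^{-1}(B_k^\top x_k^*+\bar\rho_k)$ from \eqref{eq:4.17}; and the Riccati equation \eqref{eq:4.20}, whose solvability and the invertibility of $\Theta(\Sigma_{k+1})$ are given by Theorem \ref{thm:4.3}.

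\textbf{Initial-time term.} At $k=0$ the boundary conditions $x_0^*=G_0y_0^*$ and $y_0^*=-\Sigma_0x_0^*+\varphi_0$ give $(I+G_0\Sigma_0)x_0^*=G_0\varphi_0$. I would show $I+G_0\Sigma_0$ is invertible exactly as in the Remark following \eqref{eq:4.19}: write $G_0=D_0D_0^\top$ and apply the matrix inversion lemma. Then $x_0^*=(I+G_0\Sigma_0)^{-1}G_0\varphi_0$, and this yields the boundary term $\langle (I+G_0\Sigma_0)^{-1}G_0\varphi_0,\varphi_0\rangle$. It should appear once the telescoping identity \eqref{eq:4.8} is applied again in reverse: instead of converting $\langle x_0^*,y_0^*\rangle$ into a sum, I convert $\langle x_N^*,\xi\rangle$ together with the running terms back into $\langle x_0^*,y_0^*\rangle$.

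\textbf{Running terms.} I would compute $\mathbb E\sum_k[\langle x_k^*,y_k^*\rangle-\langle x_{k+1}^*,y_{k+1}^*\rangle]$ directly in terms of $x^*$ and $\varphi$, substituting $y_{k+1}^*=-\Sigma_{k+1}x_{k+1}^*+\varphi_{k+1}$ and \eqref{eq:4.24}. The quadratic terms in $x_k^*$ must cancel, using \eqref{eq:4.20} together with $\mathbb E_{k-1}[\omega_k]=0$ and $\mathbb E_{k-1}[\omega_k^2]=1$. What remains should be quadratic in $\varphi_{k+1}$ and $\bar\eta_k$, plus the term $-\langle\bar\rho_k,\bar R_k^{-1}\bar\rho_k\rangle$ from completing the square in the control. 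Combining this identity with \eqref{eq:4.18} then leaves:
\begin{itemize}
\item $\langle -\bar Q_k\Sigma_{k+1}\Theta^{-1}(\Sigma_{k+1})\bar Q_k\varphi_{k+1}+\bar Q_k\varphi_{k+1}+\bar\eta_k,\varphi_{k+1}\rangle$;
\item $-\langle\bar\eta_k,\Sigma_{k+1}\Theta^{-1}(\Sigma_{k+1})\bar\eta_k\rangle$;
\item $-\langle\bar\rho_k,\bar R_k^{-1}\bar\rho_k\rangle$;
\item $\langle H_N\xi,\xi\rangle$, which is already present in \eqref{eq:4.18} since $y_N=\xi$.
\end{itemize}

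\textbf{Expected obstacle.} The main difficulty is the cross terms linear in $x_k^*$ involving $\varphi$, $q_k$, $\bar\eta_k$ and $\bar\rho_k$. These have to cancel exactly, and that cancellation relies on the specific coefficients $f^{(1)},f^{(2)},f^{(3)}$ in the $\varphi$-equation \eqref{eq:4.29}. To control the bookkeeping I would first verify the homogeneous case $q=\eta=\rho=0$, where only the quadratic form in $\varphi$ survives, and then track the affine terms separately. A further subtlety is the $\omega_k$-term: in \eqref{eq:4.24} the product $\bar C_k^\top x_k^*\omega_k$ pairs with $\varphi_{k+1}$ to produce $\mathbb E_{k-1}[\varphi_{k+1}\omega_k]$, which must match the $\bar C_k\mathbb E_{k-1}[\varphi_{k+1}\omega_k]$ term in \eqref{eq:4.29}. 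I would also use the symmetry $\Sigma_{k+1}\Theta^{-1}(\Sigma_{k+1})=\Theta^{-\top}(\Sigma_{k+1})\Sigma_{k+1}$, which follows from $\bar Q_k$ and $\Sigma_{k+1}$ both being symmetric, to rewrite the mixed terms in the symmetric form of \eqref{eq:4.30}.
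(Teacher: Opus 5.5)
\textbf{There is a genuine gap, and it starts with what you telescope.} You propose to rewrite $\mathbb{E}\langle x_N^*,\xi\rangle$ by summing the increments of $\langle x_k^*,y_k^*\rangle$, i.e.\ by running \eqref{eq:4.8} backwards.

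If those increments are evaluated with the state and adjoint equations, as in \eqref{eq:4.8}, this merely undoes the derivation of \eqref{eq:4.18} and gives back the cost functional. If instead you evaluate them through $y^*=-\Sigma x^*+\varphi$ and \eqref{eq:4.24}, two of your claims fail:

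\begin{itemize}
\item The boundary term is $\langle x_0^*,y_0^*\rangle=\langle x_0^*,\varphi_0\rangle-\langle \Sigma_0x_0^*,x_0^*\rangle$. It is not $\langle (I+G_0\Sigma_0)^{-1}G_0\varphi_0,\varphi_0\rangle=\langle x_0^*,\varphi_0\rangle$.
\item The terms quadratic in $x_k^*$ do not cancel step by step. Expanding with \eqref{eq:4.24} and \eqref{eq:4.20} leaves $-\langle \bar Q_k\Sigma_{k+1}\Theta^{-1}(\Sigma_{k+1})A_k^\top x_k^*,\Sigma_{k+1}\Theta^{-1}(\Sigma_{k+1})A_k^\top x_k^*\rangle-\langle \bar R_k^{-1}B_k^\top x_k^*,B_k^\top x_k^*\rangle$. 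It also leaves an $\omega_k^2$-contribution quadratic in $\bar C_k^\top x_k^*$, for which \eqref{eq:4.20} has no matching term.
\end{itemize}

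These extra pieces are just the increments of $\langle\Sigma_kx_k^*,x_k^*\rangle$. They must be telescoped, not expanded. Since $\Sigma_N=0$, they sum to $\langle\Sigma_0x_0^*,x_0^*\rangle$, which exactly repairs the boundary term.

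Once you strip them off, what remains is the paper's argument in \eqref{eq:4.32}: telescope $\langle x_k^*,\varphi_k\rangle$ instead, using $\varphi_N=\xi$. This produces only terms linear in $x_k^*$. Together with $\langle x_k^*,q_k\rangle$, $\langle\bar\eta_k,\mathbb{E}_{k-1}[y_{k+1}^*]\rangle$ and $\langle\bar\rho_k,\bar u_k^*\rangle$ from \eqref{eq:4.18}, they sum to $\langle \Gamma_k-\varphi_k,x_k^*\rangle$, where $\Gamma_k$ is the right-hand side of \eqref{eq:4.29}, so they vanish. The boundary term then comes out as $\langle x_0^*,\varphi_0\rangle$ directly.

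Note also that the paper expands $x_{k+1}^*$ itself with the adjoint equation in \eqref{eq:4.17}. It uses \eqref{eq:4.22} and \eqref{eq:4.24} only inside $\mathbb{E}_{k-1}[y_{k+1}^*]$. If you substitute \eqref{eq:4.24} for $x_{k+1}^*$, its martingale part carries the factor $\Theta^{-1}(\Sigma_{k+1})$. Pairing with $\varphi_{k+1}$ then gives $\bar C_k\big(\Theta^{-1}(\Sigma_{k+1})\big)^\top\mathbb{E}_{k-1}[\varphi_{k+1}\omega_k]$. That does not match the term $\bar C_k\mathbb{E}_{k-1}[\varphi_{k+1}\omega_k]$ of \eqref{eq:4.29}, contrary to what you expect.

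\textbf{Independently of the route, your final step cannot be completed.} You read off exactly the terms of \eqref{eq:4.30}; this is asserted rather than derived, and the actual computation differs in three places:

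\begin{itemize}
\item The $x^*$-free term $\tfrac12\mathbb{E}\sum_k\langle H_kq_k,q_k\rangle$ in \eqref{eq:4.18} is untouched by every substitution. The paper's own last line \eqref{eq:4.33} keeps it, but \eqref{eq:4.30} drops it.
\item Pairing $\bar Q_k\mathbb{E}_{k-1}[y_{k+1}^*]$ with $\varphi_{k+1}$ puts $\mathbb{E}_{k-1}[\varphi_{k+1}]$, not $\varphi_{k+1}$, into the quadratic form.
\item The terms linear in $\bar\eta_k$ come from two sources: the adjoint equation and $\langle\bar\eta_k,\mathbb{E}_{k-1}[y_{k+1}^*]\rangle$. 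They add up to $2\langle\bar\eta_k,(I-\Sigma_{k+1}\Theta^{-1}(\Sigma_{k+1})\bar Q_k)\mathbb{E}_{k-1}[\varphi_{k+1}]\rangle$, not $\langle\bar\eta_k,\varphi_{k+1}\rangle$. The paper's passage from \eqref{eq:4.32} to \eqref{eq:4.33} mishandles these; see the sign of $\bar\eta_k$ inside the parentheses in \eqref{eq:4.32}.
\end{itemize}

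In fact \eqref{eq:4.30} already fails in a simple case that satisfies all the assumptions. Take $N=n=m=1$, $B_0=0$, $Q_0=S_0=G_0=0$, $q_0=\rho_0=0$, $R_0=1$, $\eta_0=1$, $\xi\equiv1$, with $A_0,C_0$ arbitrary. Then $\mathcal{J}(1,\xi;u)=\tfrac12u_0^2+1$, so $V(1,\xi)=1$. In \eqref{eq:4.30}, however, $H\equiv0$, $\bar Q_0=0$, $\Sigma_1=0$ and $\bar\rho_0=0$, so the right-hand side is only $\tfrac12\mathbb{E}\langle\bar\eta_0,\varphi_1\rangle=\tfrac12$.

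So the computation you outline, done correctly along the paper's telescoping, ends at a formula different from \eqref{eq:4.30}, and the plan cannot close as stated.
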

	\begin{proof}
    	Noting that
\begin{equation}\label{eq:4.31}
\left[\Sigma_{k+1} \Theta^{-1}(\Sigma_{k+1})\right]^\top
= \Sigma_{k+1} \Theta^{-1}(\Sigma_{k+1}),
\end{equation}
and from
\[
x_0 = G_0 y_0 = -G_0 \Sigma_0 x_0 + G_0 \varphi_0,
\]
we derive
\[
x_0 = (I + G_0 \Sigma_0)^{-1} G_0 \varphi_0.
\]
Here, $I + G_0 \Sigma_0$ is invertible, which can be proven following Remark 4.

		We next proceed according to the relations \eqref{eq:4.22},  \eqref{eq:4.24}.
		By summing the expression
		$\langle x_{k+1}^{*}, y_{k+1} \rangle - \langle x_{k}^{*}, y_{k} \rangle$
		from $k = 0$ to $k = N$, we compute the expectation
		\begin{equation}\label{eq:4.32}
			\begin{aligned}
				&\mathbb{E} \Bigl[ \langle x_N^*, y_N^* \rangle \Bigr] \\=& \mathbb{E} \Bigl[ \langle x_N^*, \xi \rangle \Bigl] \\
				=& \mathbb{E} \Bigg\{ \sum_{k=0}^{N-1} \Bigl[\langle x_{k+1}^*, \varphi_{k+1} \rangle - \langle x_k^*, \varphi_k \rangle   \Bigl]+ \langle x_0^*, \varphi_0 \rangle\Bigg\} \\
				=& \mathbb{E} \Bigg\{ \sum_{k=0}^{N-1} \Bigl[\langle A_k^{\top} x_k^* + \bar{Q}_k \mathbb{E}_{k-1} [y_{k+1}] + \bar{\eta}_k\\& + \bar{C}_{k}^{\top} x_k^* \omega_k, \varphi_{k+1} \rangle - \langle x_k^*, \varphi_k \rangle \Bigl]+ \langle x_0^*, \varphi_0 \rangle\Bigg\}\\
				=&\mathbb{E}\Bigg\{ \sum_{k=0}^{N-1}  \Bigl[\langle  A_k (I - \Sigma_{k+1} \Theta^{-1}(\Sigma_{k+1}) \bar{Q}_k)  \mathbb{E}_{k-1} [\varphi_{k+1}] \\&+\bar{C}_{k}\mathbb{E}_{k-1} [ \varphi_{k+1}\omega_k]- \varphi_k, x_k^*\rangle\\&+ \langle -\bar{Q}_k \Sigma_{k+1} \Theta^{-1}(\Sigma_{k+1})( \bar{Q}_k \varphi_{k+1}-
				\bar{\eta}_k) + \bar{Q}_k \varphi_{k+1}\\&+ \bar{\eta}_k, \varphi_{k+1} \rangle \Bigl]+ \langle (I + G_0 \Sigma_0)^{-1} G_0 \varphi_0, \varphi_0 \rangle \Bigg\}
			\end{aligned}
		\end{equation}
		Substitute equation \eqref{eq:4.32} into equation \eqref{eq:4.18}, and by utilizing the relations \eqref{eq:4.22}, \eqref{eq:4.24}, and the stationarity condition in \eqref{eq:4.17} , we obtain the following value function:
        \[
                \begin{aligned}
            	&\quad V(N, \xi)\\ =&\frac{1}{2}\mathbb{E}\Bigg\{\sum_{k=0}^{N-1}\bigg[\left\langle x_{k}^{*}, q_k\right\rangle+\left\langle \bar{\eta}_{k}, \mathbb{E}_{k-1}[y_{k+1}^{*}]\right\rangle+\left\langle \bar{\rho}_{k}, \bar{u}_k^{*}\right\rangle \\&+ \langle H_k q_k, q_k \rangle \bigg]+\left\langle x_{N}^{*}, y_{N}^{*}\right\rangle+ \langle H_N y_N, y_N \rangle\Bigg\}\\
            	 =& \frac{1}{2} \mathbb{E}\Bigg\{ \sum_{k=0}^{N-1} \bigg[\left\langle   A_k( I - \Sigma_{k+1}\Theta^{-1}(\Sigma_{k+1})\bar{Q}_{k} ) \mathbb{E}_{k-1}[\varphi_{k+1}]\right.\\&\left.+ \bar{C}_{k}\mathbb{E}_{k-1} [ \varphi_{k+1}\omega_k]- \varphi_{k}, x_{k}^{*} \right\rangle+ \langle - \bar{Q}_{k} \Sigma_{k+1} \\&\Theta^{-1}(\Sigma_{k+1}) (\bar{Q}_{k} \varphi_{k+1} - \bar{\eta}_{k}) + \bar{Q}_{k} \varphi_{k+1}+ \bar{\eta}_{k}, \varphi_{k+1} \rangle\\&+ \langle x_{k}^{*}, q_k \rangle+ \langle \bar{\eta}_{k}, \mathbb{E}_{k-1}[- \Sigma_{k+1} \Theta^{-1}(\Sigma_{k+1})[ A_k^{\top} x_{k}^{*}  \\
				& + \bar{Q}_{k}  \mathbb{E}_{k-1}\left[\varphi_{k+1}\right]+ \bar{\eta}_{k} + \bar{C}_{k}^{\top} x_{k}^{*} \omega_k ]+\varphi_{k+1} ]\rangle\\
				&+ \left\langle \bar{\rho}_{k},  -\bar{R}_k^{-1} \left( B_k^{\top} x_k^* + \bar{\rho}_k \right) \right \rangle  + \langle H_k q_k, q_k \rangle \bigg] \\
				& + \langle H_N \xi, \xi \rangle+ \langle (I + G_0 \Sigma_0)^{-1} G_0 \varphi_0, \varphi_0 \rangle\Bigg\}\\
 				=& \frac{1}{2} \mathbb{E} \left\{\sum_{k=0}^{N-1} \bigg[\langle -B_k\bar{R}_{k}^{-1} \bar{\rho}_{k} + \Pi\left(\Sigma_{k+1}, \varphi_{k+1}\right) - \varphi_{k}, x_{k}^{*} \rangle\right. \\
				& - \langle \bar{Q}_{k} \Sigma_{k+1} \Theta^{-1}(\Sigma_{k+1})\bar{Q}_{k} \varphi_{k+1} + \bar{Q}_{k} \varphi_{k+1}+ \bar{\eta}_{k}, \varphi_{k+1} \left\rangle \right.\\
 				& \left. - \langle \bar{\eta}
  				_{k}, \Sigma_{k+1} \Theta^{-1}(\Sigma_{k+1}) \bar{\eta}_{k}\right\rangle- \langle \bar{\rho}_{k}, \bar{R}_{k}^{-1} \bar{\rho}_{k} \rangle\\& + \langle H_k q_k, q_k \rangle \bigg]+ \langle H_N \xi, \xi \rangle+ \langle (I + G_0 \Sigma_0)^{-1} G_0 \varphi_0, \varphi_0 \rangle\Bigg\}
        \end{aligned}\]
        \begin{equation}\label{eq:4.33}
			\begin{aligned}
   				=& \frac{1}{2} \mathbb{E} \left\{\sum_{k=0}^{N-1}\bigg[\langle -\bar{Q}_{k} \Sigma_{k+1} \Theta^{-1}(\Sigma_{k+1})\bar{Q}_{k} \varphi_{k+1}  + \bar{Q}_{k} \varphi_{k+1}\right.\\& \left.+ \bar{\eta}_{k}, \varphi_{k+1} \left\rangle  \right.- \langle \bar{\eta}
  				_{k}, \Sigma_{k+1} \Theta^{-1}(\Sigma_{k+1})  \bar{\eta}_{k}\rangle \right.\\
   				&- \langle \bar{\rho}_{k}, \bar{R}_{k}^{-1} \bar{\rho}_{k} \rangle + \langle H_k q_k, q_k \rangle \bigg]+ \langle H_N \xi, \xi \rangle\\&+ \langle (I + G_0 \Sigma_0)^{-1} G_0 \varphi_0, \varphi_0 \rangle\Bigg\}
			\end{aligned}
		\end{equation}
    \end{proof}

    \section{Numerical Example}

    In this section, we present a numerical example of a 4-period DTBSLQ optimal control problem to illustrate the theoretical results. The problem is solved via backward recursion, with the terminal time \( N = 4 \) (time indices \( k = 0, 1, 2, 3 \)).More precisely,
    \[
        \begin{aligned}
            &\min_{u_0, u_1, u_2, u_3} \frac{1}{2}\mathbb{E}\bigg\{
            \left\langle G_{0}y_{0},y_{0}\right\rangle \\
            &+ \sum_{k=0}^{N-1} \bigg[
            \left\langle
            \begin{pmatrix}Q_{k} & S_{k}^{\top} \\ S_{k} & R_{k}\end{pmatrix}
            \begin{pmatrix} \mathbb{E}_{k-1}[y_{k+1}] \\ u_k \end{pmatrix},
            \begin{pmatrix} \mathbb{E}_{k-1}[y_{k+1}] \\ u_k \end{pmatrix}
            \right\rangle
            \\
            &+ 2\left\langle
            \begin{pmatrix} \eta_{k} \\ \rho_{k} \end{pmatrix},
            \begin{pmatrix} \mathbb{E}_{k-1}[y_{k+1}] \\ u_k \end{pmatrix}
            \right\rangle\bigg] \bigg\}
        \end{aligned}
    \]

    subject to
    \[
    \begin{aligned}
		\left\{\begin {array}{ll}
		y_k = A_k\mathbb{E}_{k-1}\left[ y_{k+1} \right] + B_ku_k\\\quad\quad + C_k\mathbb{E}_{k-1}\left[ y_{k+1}\omega_k \right] + q_k,
		\\y_N =(1, 1, 1)^\top ~~~~k \in \mathcal{T},
		\end {array}
		\right.
        \end{aligned}
	\]

    with Decision Coefficients (k=0,1,2,3) as follows:

    \[
        A_k = \left( \begin{array}{ccc}
        0.8 & 0.2 & 0.1 \\
        0 & 0.9 & 0.3 \\
        0 & 0.1 & 0.7
        \end{array} \right), \quad B_k = \left( \begin{array}{cc}
        0.8 & 0.2 \\
        0.5 & 0.6 \\
        0.3 & 0.1
        \end{array} \right),\]
        \[
         C_k = \left( \begin{array}{ccc}
        0.3 & 0.2 & 0.1 \\
        0.2 & 0.5 & 0.6 \\
        0.1 & 0.4 & 0.2
        \end{array} \right), \quad
        Q_k = \left( \begin{array}{ccc}
        5 & 0 & 0 \\
        0 & 3 & 0 \\
        0 & 0 & 4
        \end{array} \right),\]
    \[ \quad R_k = \left( \begin{array}{cc}
        10 & 0 \\
        0 & 5
        \end{array} \right),  G_0 = \left( \begin{array}{ccc}
        2 & 0 & 0 \\
        0 & 1 & 0 \\
        0 & 0 & 1
        \end{array} \right)
    \]
    \[
        S_k = \left( \begin{array}{ccc}
        0.5 & 0 & 0 \\
        0 & 0.5 & 0.2
        \end{array} \right), \quad q_k = \left( \begin{array}{c}
        0.1 \\
        0 \\
        0.1
        \end{array} \right),\]
         \[ \eta_k = \left( \begin{array}{c}
        1 \\
        0 \\
        1
        \end{array} \right), \quad \rho_k = \left( \begin{array}{c}
        0 \\
        1
        \end{array} \right)
    \]

    \begin{enumerate}
            \item Equivalent Transformation Matrix $H_k$:
            According to the equivalent transformation equation \eqref{eq:4.10}, we first compute the matrix sequence $H_k$ via forward recursion (with $H_0 = G_0$):
            \[
            \begin{aligned}
            H_0 &= \begin{pmatrix}
            2.0000 & 0.0000 & 0.0000 \\
            0.0000 & 1.0000 & 0.0000 \\
            0.0000& 0.0000 & 1.0000
            \end{pmatrix}, \\
            H_1 &= \begin{pmatrix}
            1.4643 & 0.4627 & 0.2451 \\
            0.4627 & 1.0434 & 0.4581 \\
            0.2451 & 0.4581 & 0.6439
    \end{pmatrix},\\
            H_2 &= \begin{pmatrix}
            1.1220 & 0.8192 & 0.5239 \\
            0.8192 & 1.4768 & 0.9736 \\
            0.5239 & 0.9736 & 0.8205
            \end{pmatrix}, \\[1ex]
            H_3 &= \begin{pmatrix}
            0.9308 & 1.1921 & 0.8687 \\
            1.1921 & 2.4303 & 1.8462 \\
            0.8687 & 1.8462 & 1.4749
            \end{pmatrix}, \\[1ex]
            H_4 &= \begin{pmatrix}
            0.8849 & 1.7796 & 1.4200 \\
            1.7796 & 4.5299 & 3.7023 \\
            1.4200 & 3.7023 & 3.0580
            \end{pmatrix}.
            \end{aligned}
            \]
            \item Riccati Equation Solution $\Sigma_k$:
            Based on the Riccati equation \eqref{eq:4.20}, we compute the semi-positive definite solution $\Sigma_k$ via backward recursion (with $\Sigma_4 = 0$):
            \[
            \begin
            {aligned}
            \Sigma_0 &= \begin
            {pmatrix}
            0.1207 & 0.1065 & 0.0440
            \\
            0.1065 & 0.1633 & 0.0441
            \\
            0.044 & 0.0441 & 0.0166
            \end{pmatrix}, \\
            \Sigma_1 &= \begin
            {pmatrix}
            0.1111 & 0.0958 & 0.0407
            \\
            0.0958 & 0.1482 & 0.0397
            \\
            0.0407 & 0.0397 & 0.0154
            \end{pmatrix},
            \\
            \Sigma_2 &= \begin
            {pmatrix}
            0.0896 & 0.0738 & 0.0333
            \\
            0.0738 & 0.1172 & 0.0310
            \\
            0.0333 & 0.0310 & 0.0126
            \end{pmatrix}, \\
            \Sigma_3 &= \begin
            {pmatrix}
            0.0491 & 0.0361 & 0.0187
            \\
            0.0361 & 0.0628 & 0.0156
            \\
            0.0187 & 0.0156 & 0.0072
            \end{pmatrix}, \\
            \Sigma_4 &= \begin
            {pmatrix}
            0.0000 & 0.0000 & 0.0000
            \\
            0.0000 & 0.0000 & 0.0000
            \\
            0.0000 & 0.0000 & 0.0000
            \end
            {pmatrix}.
            \end
            {aligned}
            \]
            \item Backward State Process $\varphi_k$:
            According to the backward stochastic difference equation \eqref{eq:4.29} (with corrected terminal condition $\varphi_4 = \xi = (1, 1, 1)^\top$), we compute $\varphi_k$ via backward recursion:
            \[
            \begin{aligned}
            \varphi_0 &= \begin{pmatrix}
            0.2674 \\
            0.0126 \\
            0.3167
            \end{pmatrix}, \quad
            \varphi_1 = \begin{pmatrix}
            0.5356 \\
            0.3112 \\
            0.4496
            \end{pmatrix}, \\
            \varphi_2 &= \begin{pmatrix}
            0.9041 \\
            0.7369 \\
            0.6504
            \end{pmatrix}, \quad
            \varphi_3 = \begin{pmatrix}
            1.1671 \\
            1.0813 \\
            0.8762
            \end{pmatrix},\\
            \varphi_4 &= \begin{pmatrix}
            1.0000 \\
            1.0000\\
            1.0000
            \end{pmatrix}.
            \end{aligned}
            \]
            \item Optimal Control Feedback Form $K_k$ and $b_k$:
            From the stationarity condition in \eqref{eq:4.17}, the optimal control admits the feedback form $u_k^* = K_k x_k^* + b_k$ ($k = 0, 1, 2, 3$), where the feedback gains $K_k=-\bar{R}_k^{-1} B_k^{\top}$ and offset terms $b_k=-\bar{R}_k^{-1} \bar{\rho}_k$ are computed as follows:
            \[
            \begin{aligned}
            K_0 &= \begin{pmatrix}
            -0.0630 & -0.0201 & -0.0254 \\
            -0.0137 & -0.0669 & -0.0354
            \end{pmatrix}, \\[1ex]
            K_1 &= \begin{pmatrix}
            -0.0522 & -0.0532 & -0.0516 \\
            -0.0280 & -0.0853 & -0.0721
            \end{pmatrix}, \\[1ex]
            K_2 &= \begin{pmatrix}
            -0.0545 & -0.1010 & -0.0899 \\
            -0.0479 & -0.1392 & -0.1234
            \end{pmatrix}, \\[1ex]
            K_3 &= \begin{pmatrix}
            -0.0811 & -0.1832 & -0.1486 \\
            -0.0951& -0.2532 &-0.2075
            \end{pmatrix}.
            \\
            b_0 &=
            \begin{pmatrix}
            -0.0056 \\
            -0.1920
            \end{pmatrix}, \quad
            b_1 =
            \begin{pmatrix}
            -0.0016 \\
            -0.1953
            \end{pmatrix},\\
            b_2 &=
            \begin{pmatrix}
            0.0013 \\
            -0.1953
            \end{pmatrix}, \quad
            b_3 =
            \begin{pmatrix}
            0.0039 \\
            -0.1932
            \end{pmatrix}.
            \end{aligned}
            \]
            \item Optimal Value Function $V(N, \xi)$:
            Substituting the above results into the value function formula \eqref{eq:4.30}, the optimal value is computed as:
            \[
            V(4, \xi) = 27.4609
            \]
            \end{enumerate}

    The numerical results verify the theoretical conclusions: the Riccati equation admits a unique semi-positive definite solution, the optimal control has an explicit state feedback form, and the value function is well-defined. All numerical computations are consistent with the discrete-time BSLQ optimal control framework proposed in this paper. 
    \section{Conclusion}
        This study focuses on the discrete-time backward stochastic linear-quadratic optimal control problem with nonhomogeneous terms, and proposes a tailored equivalent transformation method for it. By comprehensively applying the discrete stochastic maximum principle, decoupling techniques, and the method of completing the square, we successfully derive the explicit analytical expression of the optimal control as well as the analytical form of the value function. This method effectively addresses the challenge of asymmetric solutions to the Riccati equation in traditional solution frameworks, and further develops the theoretical content of discrete-time stochastic control theory. Regarding future research directions, we can investigate the discrete-time zero-sum Stackelberg stochastic linear-quadratic differential game problem.
\bibliographystyle{cas-model2-names}
\bibliography{cas-refs}
\end{document}